\newtheorem{thm}{Theorem}
\newtheorem{lem}[thm]{Lemma}
\newtheorem{prop}[thm]{Proposition}
\theoremstyle{definition}
\newtheorem{defin}[thm]{Definition}
\newtheorem{rem}[thm]{Remark}
\newcommand{\C}{\mathbb{C}}
\newcommand{\R}{\mathbb{R}}
\newcommand{\D}{\mathbb{D}}
\newcommand{\N}{\mathbb{N}}
\newcommand{\Z}{\mathbb{Z}}
\newcommand{\cL}{\mathcal{L}}
\newcommand{\cM}{\mathcal{M}}
\newcommand{\cD}{\mathcal{D}}
\newcommand{\eps}{\varepsilon}
\newcommand{\sign}{\text{sign}}
\newcommand{\be}{\begin{equation}}
\newcommand{\ee}{\end{equation}}
\definecolor{DarkGreen}{rgb}{0,0.5,0.1} 
\newcommand\soutD{\bgroup\markoverwith
{\textcolor{DarkGreen}{\rule[.5ex]{2pt}{1pt}}}\ULon}
\newcommand{\Hm}[1]{\leavevmode{\marginpar{\tiny%
$\hbox to 0mm{\hspace*{-0.5mm}$\leftarrow$\hss}%
\vcenter{\vrule depth 0.1mm height 0.1mm width \the\marginparwidth}%
\hbox to
0mm{\hss$\rightarrow$\hspace*{-0.5mm}}$\\\relax\raggedright #1}}}
\title[On the continuum limit for a model of binary waveguides arrays]{On the continuum limit for a model of binary waveguide arrays}
\author{William Borrelli}
\address[W. Borrelli]{Dipartimento di Matematica e Fisica, Universit\`a Cattolica del Sacro Cuore, Via dei Musei 41, I-25121, Brescia, Italy.}
\email{william.borrelli@unicatt.it}
\urladdr{}
\begin{document}
\maketitle

\keywords{}

\begin{abstract}
In this paper we prove the convergence of solutions to discrete models for binary waveguide arrays toward those of their formal continuum limit, for which we also show the existence of localized standing waves. This work rigorously justifies formal arguments and numerical simulations present in the Physics literature.
\end{abstract}
\medskip

{\footnotesize
\emph{Keywords}: waveguide arrays, nonlinear Dirac equations, discrete-to-continuum limit, standing waves.

\medskip

\emph{2020 MSC}:  35A01, 35Q40, 35Q41 
}


\section{Introduction and main results}
Waveguide arrays (WAs) have fundamental photonic properties that allow to describe various classical optical phenomena, such as diffractive properties or the existence of solitons. Such systems have also been successfully used in both theoretical investigations and experimental verifications of optical phenomena predicted by non-relativistic quantum mechanics, see \cite{Tran-solitons} and references therein.

Recently, \emph{binary} waveguide arrays (BWAs) have been proposed to simulate quantum relativistic effects. Those arrays consist of two different alternating types of waveguides in which light propagation can be described, in suitable regimes, by a discrete model corresponding to  discrete linear/nonlinear Dirac equations. Formal arguments in the Physics literature show that the continuum limit of such models is given by  Dirac equations, strongly suggesting the possibility to use BWAs to simulate various phenomena, including \emph{Zitterbewegung} \cite{longhi} (the trembling motion of a free electron), the Klein paradox and the existence of Dirac solitons \cite{Tran-solitons}. In recent works, the optical analogue of special states, known in quantum field theory as \emph{Jackiw-Rebbi states} (J-R states) \cite{Tran-JR}, are shown to exist at the interface of two BWAs with opposite masses. Pairs of vertically displaced BWAs have been proposed to simulate neutrino oscillations \cite{marini-neutrino}, corresponding to two coupled cubic Dirac equations in the continuum limit. The systems described above are essentially two dimensional, and the transverse spatial variable formally plays the role of time in the model. Then various dynamical features of the evolution can be `read' from the properties of the system along the transverse direction. This strongly indicates that BWAs can be used to efficiently simulate relativistic phenomena in optical systems.
\medskip

The aim of the present article is to rigorously prove the convergence of solutions to discrete models of BWAs describing Dirac solitons and J-R states toward those of their continuum limit, for which we also prove the existence of localized standing waves, numerically investigated in the literature.
\medskip

Let us describe the models under consideration.

In the monochromatic regime, light propagation in the BWAs is described \cite{Tran-solitons} by the following discrete equation
\be\label{eq:discreteeq2}
\imath \frac{d a_n(z)}{dz}=-k[a_{n+1}(z)-a_{n-1}(z)]+(-1)^n\beta a_n(z)-\gamma\vert a_n(z)\vert^2a_n(z)\,,
\ee
where $a_n(z): \R\to \C$ is the $n$-th waveguide electric field amplitude, $z\geq 0$ is the longitudinal spatial coordinate, and $2\beta$ and $k$ are the propagation mismatch and the coupling coefficient of adjacent waveguides, respectively, while $\gamma\in \R$ is the nonlinear coefficients of waveguides. In what follows we assume $\gamma=1$, corresponding to a self-focusing medium. Here $\beta,k$ are suitable functions. Notice that in \eqref{eq:discreteeq2} $z$ formally plays the role of time.
\smallskip

Equation \eqref{eq:discreteeq2} can be recast into a discrete nonlinear Dirac equation, as follows.
Set 
\be\label{eq:spinor}
\left\{\begin{array}{rcl}
	\psi^1_h(x_n,\cdot) &=& (-1)^na_{2n}(\cdot)\,,\\
	\psi^2_h(x_n,\cdot)&= & \imath (-1)^na_{2n-1}(\cdot)\,,
\end{array}\right. \qquad n\in\N\,.
\ee
Consider the position of the WAs on the lattice $h\Z$, with spacing $h>0$, the points being denoted by $x_n$, where $x_n=nh$, $n\in\Z$.

Using \eqref{eq:discreteeq2} we thus get, for $n\in\N$, $z\geq0$,
\be\label{eq:discretenld}
\displaystyle
\left\{\begin{array}{rcl}
	\imath\frac{ d\psi^1_h}{dz}(x_n,z) &=& -\imath k[\psi^2_h(x_{n+1},z)-\psi^2_h(x_n,z)]+\beta\psi^1_h(x_n,z)-\vert\psi^1_h(x_n,z)\vert^2\psi^1_h(x_n,z)\,,\\
	\imath\frac{ d\psi^2_h}{dz}(x_n,z) &= & -\imath k[\psi^1_h(x_n,z)-\psi^1_h(x_{n-1},z)]-\beta\psi^2_h(x_n,z)-\vert\psi^2_h(x_n,z)\vert^2\psi^2_h(x_n,z)\,,
\end{array}\right.\,.
\ee
 so that, taking 
 \[
 k(h)=\frac{1}{h}
 \] we find 
\be\label{eq:discretenld2}
\imath \frac{d\psi_h}{dz}(x_n,z)=\D_h\psi_h(x_n,z)+\beta\sigma_3\psi_h(x_n,z)-G(\psi_h)\psi_h(x_n,z)\,,\qquad n\in\N\,,
\ee
where 
\be\label{eq:discretedirac}
\D_h=-\imath\begin{pmatrix}0 & \partial_h \\ \partial^*_h & 0  \end{pmatrix}\,,\qquad \sigma_3=\begin{pmatrix}1 & 0 \\ 0 & -1 \end{pmatrix}\,,\qquad G(\psi)=\begin{pmatrix} \vert \psi_1(x_n,z)\vert^2 & 0 \\ 0 & \vert \psi_2(x_n,z)\vert^2 \end{pmatrix}\,.
\ee
Here 
\be\label{eq:discretederivative}
\partial_h\varphi:=h^{-1}[\varphi(x_{n+1})-\varphi(x_n)] \,,\qquad \partial^*_h\varphi:=h^{-1}[\varphi(x_n)-\varphi(x_{n-1})]
\ee
are the discrete right-hand derivative operator and its adjoint on $\ell^2(h\Z,\C^2)$, respectively.

Then, letting $h\to0^+$ we formally get the following cubic Dirac equation on the real line 
\be\label{eq:cubicdirac}
\imath \partial_z\Psi(x,z)=\cD\Psi(x,z)+\beta\sigma_3\Psi(x,z)- G(\Psi)\Psi(x,z)\,,
\ee
for $(x,z)\in\R\times(0,\infty)$, where $\sigma_3=\begin{pmatrix}1 &0 \\ 0 & -1 \end{pmatrix}$ is the third Pauli matrix and 
\be\label{eq:dirac}
\cD=-\imath\sigma_1\partial_x=-\imath\begin{pmatrix}0 & \partial_x \\  \partial_x & 0 \end{pmatrix}
\ee
 is the one-dimensional Dirac operator.

We consider both the case of a positive constant mass $\beta=const.$ and of a \emph{domain wall} mass interpolating between two different asymptotic values.
\begin{defin}\label{def:domainwall}
An increasing $C^1$ function $\beta:\R\to\R$ with bounded derivative is called a \emph{domain wall} if it is odd, i.e. $\beta(x)=-\beta(x)$, and there holds
\be\label{eq:betalimits}
\lim_{x\to\infty}\beta(x)=\beta(\infty)\in(0,+\infty)\,,\qquad \lim_{x\to-\infty}\beta(x)=-\beta(\infty)<0 \,.
\ee
Moreover, the asymptotic limits are approached sufficiently rapidly
\[
\int^\infty_0\vert \beta(y)-\beta(\infty)\vert\,dy<\infty\,,\qquad \int^0_{-\infty}\vert \beta(y)+\beta(\infty)\vert\,dy<\infty
\]
\end{defin} 
An example of such functions is given by $\beta(x)=\tanh(x)$, $x\in\R$. Notice that our definition is slightly more restrictive than the one given in \cite{LWW}, for technical reasons, but it covers physically interesting cases.
\smallskip

When $\beta$ is constant or a domain wall, the Dirac operator $\cD+\beta\sigma_3$ is self-adjoint on $L^2(\R,\C^2)$, with domain $H^1(\R,\C^2)$. When $\beta$ is constant the spectrum is purely continuous and it is given by 
\[
\operatorname{Spec}(\cD+\beta\sigma_3)=(-\infty,-\beta]\cup[\beta,+\infty)\,,
\]
see \cite{diracthaller}.
 The case of a domain wall mass has been treated in \cite{LWW}, where the authors shows that, in addition to the continuous spectrum, finitely many simple eigenvalues can appear in the mass gap 
 \[
 \operatorname{Spec}(\cD+\beta\sigma_3)=(-\infty, \beta(-\infty)]\cup\{\lambda_i\}^N_{i=1}\cup[\beta(\infty),+\infty)\,.
 \]
 In particular, when $\beta(\cdot)=\tanh(\cdot)$, there is only one eigenvalue $\lambda=0$.
 
 We remark that the results in \cite{LWW} also apply to non smooth functions like the signum function
 \be\label{eq:signum}
 \sign (x)=\left\{\begin{array}{rcl}
	1 \,,\qquad x>0\\
	-1 \,,\qquad x<0
\end{array}\right. \,,
 \ee
 which is the one actually considered in \cite{Tran-JR} for the investigation of photonic J-R states. Here we allow for general domain wall functions, as in Def. \ref{def:domainwall}.
 
 More details on the Dirac operators involved and on the functional setting can be found in Section \ref{sec:prel}.
 
 \medskip
 Before stating our main results, some definitions are needed.

 Given a function $f\in L^1_{loc}(\R)$, its discretization $f_h:h\Z\to\C$ is defined as
\be\label{eq:discretization}
f_h(x_n):=\frac{1}{h}\int^{x_{n+1}}_{x_n}f(x)\,dx\,.
\ee
The piecewise constant interpolation $q_hf_h$ is given by
\be\label{eq:piecewiseconstant}
q_hf_h(x):=f_h(x_n)\,,\qquad x\in[x_n,x_{n+1})\,.
\ee
We define the piecewise linear interpolation $p_hu_h:\R\to\C$ of a lattice function $u_h:h\Z\to\C$ setting
\be\label{eq:piecewiselinear}
(p_hu_h)(x):=u_h(x_n)+(\partial_hu_h)(x_n)(x-x_n)\,,\quad x\in[x_n,x_{n+1})\,,
\ee
where $\partial_h v(x):=h^{-1}[v(x+h)-v(x)]$ is the discrete right-hand side derivative operator.
\smallskip

We first consider the case of a constant mass $\beta$.

\begin{thm}\label{thm:main}
Take $\chi\in H^1(\R,\C^2)$, and consider its discretization $\chi_h:h\Z\to\C^2$ defined as in \eqref{eq:discretization}. Let $\psi_h\in C^1_z([0,\infty), L^2_h(\R^2,\C^2)))$ be the unique global solution to \eqref{eq:discretenld} with initial datum $\chi_h$.
Then there exist a constant $C>0$, independent of $\chi_h$ and $h$, such that for every $0<T<(2C\Vert\varphi_h\Vert^2_{H^1_h})^{-1}$, there holds
\[
p_h\psi_h\rightharpoonup\Psi\,,\qquad\mbox{weakly-$\ast$ in} \,\,L^\infty([0,T],H^1(\R,\C^2)) \qquad \mbox{as $h\to0^+$\,,} 
\]
where $\Psi\in C^0([0,\infty), H^1(\R,\C^2))$ is the unique global solution to \eqref{eq:cubicdirac} with initial datum $\chi$.
\end{thm}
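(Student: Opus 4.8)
The strategy is the standard one for discrete-to-continuum limits: establish $h$-uniform bounds for $\psi_h$ in the discrete energy space, extract a weak-$\ast$ limit, pass to the limit in the weak formulation of \eqref{eq:discretenld2}, and identify the limit by uniqueness for \eqref{eq:cubicdirac}; below $L^2_h=\ell^2(h\Z,\C^2)$ and $\|u_h\|_{H^1_h}^2=\|u_h\|_{L^2_h}^2+\|\partial_hu_h\|_{L^2_h}^2$, and in the statement $\varphi_h=\chi_h$. \textbf{Step 1 (a priori estimates).} Pairing \eqref{eq:discretenld2} with $\psi_h$ in $L^2_h$ and taking real parts, the contribution of $\D_h+\beta\sigma_3$ vanishes by self-adjointness and that of $G(\psi_h)$ vanishes because $G(\psi_h)$ is Hermitian; hence $\|\psi_h(z)\|_{L^2_h}=\|\chi_h\|_{L^2_h}$ for all $z\ge0$. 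The Dirac energy being sign-indefinite, to control the $H^1_h$ norm I differentiate the equation: $\partial_h$ commutes with $\D_h$ and with $\sigma_3$, so $\partial_h\psi_h$ solves the same linear Dirac system with source $-\partial_h(G(\psi_h)\psi_h)$. By the discrete Leibniz rule and the $h$-uniform embedding $\|u_h\|_{L^\infty_h}\le\|u_h\|_{H^1_h}$ one has $\|\partial_h(G(\psi_h)\psi_h)\|_{L^2_h}\le C\|\psi_h\|_{H^1_h}^2\|\partial_h\psi_h\|_{L^2_h}$; pairing the differentiated equation with $\partial_h\psi_h$ (the self-adjoint part again contributing nothing) gives $\tfrac{d}{dz}\|\partial_h\psi_h\|_{L^2_h}^2\le C\|\psi_h\|_{H^1_h}^4$. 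With $y(z):=\|\psi_h(z)\|_{H^1_h}^2$ this reads $\dot y\le Cy^2$, so $y(z)\le 2y(0)$ for $z<(2Cy(0))^{-1}$; since $\|\chi_h\|_{H^1_h}\le\|\chi\|_{H^1}$ uniformly in $h$ (a property of the discretization \eqref{eq:discretization}), the bound $\sup_{[0,T]}\|\psi_h\|_{H^1_h}\le C(\|\chi\|_{H^1})$ is $h$-uniform on $[0,T]$ whenever $T<(2C\|\chi_h\|_{H^1_h}^2)^{-1}$. Feeding this back into \eqref{eq:discretenld2}, with $\|\D_h\psi_h\|_{L^2_h}\le\|\psi_h\|_{H^1_h}$ and $\|G(\psi_h)\psi_h\|_{L^2_h}\le C\|\psi_h\|_{H^1_h}^3$, bounds $\partial_z\psi_h$ in $L^2_h$ uniformly in $z\in[0,T]$ and $h$.

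\textbf{Step 2 (compactness).} Using $\partial_x(p_hu_h)=q_h(\partial_hu_h)$, $\|q_hv_h\|_{L^2}=\|v_h\|_{L^2_h}$, $\|p_hu_h-q_hu_h\|_{L^2}\le h\|\partial_hu_h\|_{L^2_h}$ and $\partial_z p_h\psi_h=p_h\partial_z\psi_h$, Step 1 shows that $\{p_h\psi_h\}$ is bounded in $L^\infty([0,T],H^1(\R,\C^2))\cap W^{1,\infty}([0,T],L^2(\R,\C^2))$. By Banach--Alaoglu, along a subsequence $p_h\psi_h\rightharpoonup\Psi$ weakly-$\ast$ in $L^\infty([0,T],H^1)$. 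The Lipschitz bound in $z$ (into $L^2$) together with the precompactness in $L^2(B_R)$, for each $R>0$, of the $H^1$-bounded family $\{p_h\psi_h(z)\}_h$ at fixed $z$, via Arzel\`a--Ascoli and a diagonal argument in $R$, upgrades this to strong convergence in $C^0([0,T],L^2_{loc}(\R,\C^2))$; interpolating with the $h$-uniform $L^\infty_x$ bound coming from $H^1(\R)\hookrightarrow L^\infty(\R)$, the convergence is strong in $C^0([0,T],L^p_{loc}(\R,\C^2))$ for every $p<\infty$, and $q_h\psi_h$ converges to the same limit $\Psi$. Finally $p_h\chi_h\to\chi$ in $L^2$, so $\Psi(0)=\chi$.

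\textbf{Step 3 (identification of the limit and conclusion).} Test \eqref{eq:discretenld2} in $L^2_h$ against the discretization $\Phi_h$ of $\Phi\in C^\infty_c(\R\times[0,T))$, integrate by parts in $z$, and move $\D_h$ onto $\Phi_h$ using self-adjointness. Via $\langle u_h,v_h\rangle_{L^2_h}=\langle q_hu_h,q_hv_h\rangle_{L^2}$ and the consistency relations $q_h\Phi_h\to\Phi$, $q_h\partial_z\Phi_h\to\partial_z\Phi$, $q_h\D_h\Phi_h\to\cD\Phi$ in $L^2$ (elementary; cf.\ Section \ref{sec:prel}), the weak $L^2$ convergence of $q_h\psi_h$ handles the linear terms while the strong $L^3_{loc}$ convergence of Step 2 handles the cubic term; passing to the limit shows that $\Psi\in L^\infty([0,T],H^1)\cap W^{1,\infty}([0,T],L^2)$ is a distributional (hence mild) solution of \eqref{eq:cubicdirac} with datum $\chi$. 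If $\widetilde\Psi\in C^0([0,\infty),H^1)$ is the global solution, then $w:=\Psi-\widetilde\Psi$ satisfies $\imath\partial_z w=(\cD+\beta\sigma_3)w-(G(\Psi)\Psi-G(\widetilde\Psi)\widetilde\Psi)$ with $w(0)=0$, whence $\tfrac{d}{dz}\|w\|_{L^2}^2\le C(\|\Psi\|_{L^\infty}^2+\|\widetilde\Psi\|_{L^\infty}^2)\|w\|_{L^2}^2$ and Gr\"onwall forces $\Psi\equiv\widetilde\Psi$ on $[0,T]$. Since the weak-$\ast$ limit point is thereby unique, the whole family $p_h\psi_h$ converges, as claimed.

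\textbf{Main obstacle.} The delicate point is Step 1: because the Dirac energy is sign-indefinite, one cannot bound $\|\psi_h\|_{H^1_h}$ via conservation of energy and must instead commute the equation with $\partial_h$ and close a Gr\"onwall inequality for $\|\partial_h\psi_h\|_{L^2_h}$ --- which is exactly what produces the finite lifespan $T<(2C\|\chi_h\|_{H^1_h}^2)^{-1}$. The accompanying bookkeeping (the discrete Leibniz rule, commutation of $\partial_h$ with $\D_h$, and the $h$-uniform mapping properties of the discretization map of \eqref{eq:discretization} and of $q_h$, $p_h$, needed both here and for the consistency in Step 3) is routine but must be carried out uniformly in $h$.
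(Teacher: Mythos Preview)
Your proof is correct and follows the same overall architecture as the paper: $h$-uniform $H^1_h$ bounds on a time interval of length $\sim\|\chi_h\|_{H^1_h}^{-2}$, Banach--Alaoglu compactness for $p_h\psi_h$, passage to the limit in the weak formulation against smooth compactly supported test functions, and identification of the limit via uniqueness for \eqref{eq:cubicdirac}. The only noteworthy variation is in Step~1: the paper obtains the a priori bound by writing the Duhamel formula, using that the propagator $e^{-iz(\D_h+\beta\sigma_3)}$ is a Fourier multiplier preserving $H^1_h$, and then applying Bihari's nonlinear Gronwall inequality to $\|\psi_h(z)\|_{H^1_h}\le\|\chi_h\|_{H^1_h}+C\int_0^z\|\psi_h(s)\|_{H^1_h}^3\,ds$; you instead commute $\partial_h$ through the equation (which works since $\partial_h\partial_h^*=\partial_h^*\partial_h$) and run a direct $L^2_h$ energy estimate on $\partial_h\psi_h$, arriving at $\dot y\le Cy^2$ with $y=\|\psi_h\|_{H^1_h}^2$. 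Both routes are standard and yield the same lifespan restriction; yours has the mild advantage of not invoking Bihari's lemma or the Fourier-multiplier structure of the propagator, while the paper's Duhamel route makes the algebra property of $H^1_h$ do all the work in one estimate. The remaining steps (Arzel\`a--Ascoli in place of the paper's reference to \cite{Lady} for local strong convergence, and your explicit Gr\"onwall uniqueness argument in lieu of citing Proposition~\ref{eq:gwp}) are cosmetic differences.
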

\begin{rem}
Observe that in the above Theorem we can only consider timescales depending on the size of the initial datum, in contrast to \cite{KLS} where the result holds for all fixed $T>0$. This difference is due to the sign indefiniteness of the energy associated with the Dirac equation which can not be used to obtain uniform estimates for solutions, contrary to what can be done for Schr\"odinger type equations. The same observation, of course, applies to Theorem \ref{thm:main3}.
\end{rem}

The second main result concerns the existence of standing waves for \eqref{eq:cubicdirac} for frequencies in the mass gap. Such solutions are of the form
\be\label{eq:standingwaves}
\Psi(z,x)=e^{-\imath \omega z}\Phi(x)\,,\qquad \omega\in\R\,,
\ee
that is, they are `time' modulation of a fixed spatial profile $\Phi:\R\to\C^2$. As already remarked, in the model considered the oscillating factor corresponds to spatial periodicity in the transverse direction. The function $\Phi(\cdot)$ thus solves the following stationary equation
\be\label{eq:stationary}
(\cD+\beta\sigma_3-\omega)\Phi(x)=G(\Phi(x))\Phi(x)\,,\qquad x\in\R.
\ee
\begin{thm}\label{thm:main2}
For every $\omega\in(0,\beta)$, equation \eqref{eq:stationary} admits a smooth solution of the form $\Phi=(u,\imath v)$, with $u,v$ real-valued, which decays exponentially at infinity
\be\label{eq:decay}
\vert \Phi(x)\vert=\sqrt{u^2(x)+v^2(x)}\leq C e^{\sqrt{\beta^2-\omega^2}\vert x\vert}\,,\qquad \forall x\in\R\,,
\ee
for some constant $C=C(\beta,\omega)>0$.
\end{thm}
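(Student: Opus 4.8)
The plan is to reduce the stationary Dirac system \eqref{eq:stationary} with the ansatz $\Phi = (u, \imath v)$ to a real planar Hamiltonian ODE system and exhibit a homoclinic orbit by a shooting/phase-plane argument, or alternatively by an explicit integration. Writing $\cD + \beta\sigma_3 - \omega$ out in components with $\Phi = (u, \imath v)$, $u,v\colon \R\to\R$, and using $G(\Phi)\Phi = (|u|^2 u, \imath |v|^2 v)$, the system \eqref{eq:stationary} becomes
\begin{equation}
\left\{\begin{array}{rcl}
v' + (\beta - \omega) u &=& u^3\,,\\
-u' - (\beta + \omega) v &=& v^3\,,
\end{array}\right.
\end{equation}
after collecting real and imaginary parts (signs to be fixed once the conventions of \eqref{eq:dirac} are pinned down). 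This is a planar autonomous system in $(u,v)$ when $\beta$ is constant; its linearization at the origin has eigenvalues $\pm\sqrt{\beta^2-\omega^2}$, which is a hyperbolic saddle precisely because $\omega\in(0,\beta)$. A homoclinic orbit to this saddle corresponds exactly to a solution of \eqref{eq:stationary} decaying at both ends, with decay rate governed by the stable/unstable eigenvalue $\sqrt{\beta^2-\omega^2}$, giving \eqref{eq:decay}.

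The key steps, in order, are: (i) derive the reduced ODE system carefully and record its conserved quantity — the system is Hamiltonian with $H(u,v) = \tfrac{1}{2}(\beta-\omega)u^2 + \tfrac12(\beta+\omega)v^2 - \tfrac14 u^4 - \tfrac14 v^4$ up to sign conventions (one should check it is an actual first integral; for one-dimensional cubic Dirac this is classical and the profile can in fact be written explicitly in terms of hyperbolic/secant functions, cf. the Soler-type solitons in 1D); (ii) locate the level set $\{H = 0\}$ containing the origin and show it contains a bounded orbit that is homoclinic to $(0,0)$, using that near the origin the quadratic part is positive definite (so $\{H=0\}$ pinches at the origin) while the quartic terms bend the level curve back — a standard convexity/coercivity analysis of the quartic curve; (iii) verify the orbit is non-trivial, i.e. does not collapse to the equilibrium, by a shooting argument along the unstable manifold or by exhibiting the explicit solution; (iv) upgrade regularity: since $\Phi$ solves a first-order ODE with smooth (polynomial) right-hand side and $\Phi\in L^\infty$ along the orbit, bootstrap to get $\Phi\in C^\infty$; (v) read off the exponential decay from the linearization at the saddle — the stable manifold theorem gives $|\Phi(x)| \lesssim e^{-\sqrt{\beta^2-\omega^2}|x|}$ as $x\to+\infty$ and symmetrically as $x\to -\infty$, with the constant depending on $(\beta,\omega)$; an odd/even symmetry of the profile (using that the system is invariant under $x\mapsto -x$, $u\mapsto u$, $v\mapsto -v$ or similar) lets one glue the two halves.

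I would also note the variational alternative: \eqref{eq:stationary} is the Euler--Lagrange equation of an indefinite functional, so one could instead invoke a linking/generalized Nehari argument (as in Esteban--Séré type results for Dirac equations) to get a nontrivial solution, then prove exponential decay a posteriori via a Kato-type inequality and a comparison argument using that $\omega^2 < \beta^2$ keeps $\omega$ in the resolvent gap. But for the purely one-dimensional cubic problem the ODE route is far cleaner and even yields explicit profiles, so that is what I would carry out; the variational remark is worth keeping only as a pointer.

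The main obstacle I anticipate is step (ii)--(iii): showing the relevant level set of $H$ is exactly a single homoclinic loop and not, say, a family of periodic orbits surrounding a center, or a heteroclinic connection to spurious equilibria introduced by the cubic terms. One must check the only equilibria of the reduced system on the zero level set are the origin itself (the cubic nullclines $u^3 = u^2(\dots)$ etc. can create extra fixed points $u,v\neq 0$, and one needs $\omega\in(0,\beta)$ to push those off the $\{H=0\}$ set or to the correct side), and that the orbit leaving the origin along the unstable direction returns rather than escaping to infinity — this is where the sign of the nonlinearity (self-focusing, $\gamma = 1 > 0$) is used essentially. Pinning down the signs in the reduced system from the conventions in \eqref{eq:discretedirac}--\eqref{eq:dirac} and \eqref{eq:spinor} is the bookkeeping one has to get exactly right for this to work.
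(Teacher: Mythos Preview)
Your overall strategy matches the paper's: reduce \eqref{eq:stationary} via the ansatz $\Phi=(u,\imath v)$ to a planar Hamiltonian system, find a homoclinic orbit to the origin on the zero-energy level set, and read off the exponential rate from the linearization. One point to correct: you write that the quadratic part of $H$ is positive definite so that $\{H=0\}$ ``pinches'' at the origin, but this is inconsistent with your own (correct) claim that the origin is a hyperbolic saddle --- in a planar Hamiltonian system a definite Hessian gives a center, not a saddle. The paper's Hamiltonian is $H(u,v)=\tfrac14(u^4+v^4)+\tfrac{\beta}{2}(v^2-u^2)+\tfrac{\omega}{2}(u^2+v^2)$, whose quadratic part is indefinite; the set $\{H=0\}$ near the origin is an ``X'' of two crossing curves (the stable/unstable manifolds), and the homoclinic loop is one of its bounded components.

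For the step you flag as the main obstacle --- showing the orbit on $\{H=0\}$ starting at $(\sqrt{2(\beta-\omega)},0)$ is genuinely homoclinic and not periodic or heteroclinic --- the paper does not use shooting or an explicit profile. Instead it first checks that the origin is the \emph{only} equilibrium, then confines the orbit to the sector $\{u^2>v^2,\ u\ge 0\}$ (since $H=0$ on $\{u^2=v^2\}$ forces $u=v=0$), and finally invokes Poincar\'e--Bendixson together with an angular monotonicity computation: with $\theta=\arctan(v/u)$ one gets $\theta'=(u^4+v^4)/(2(u^2+v^2))>0$ along the orbit, which rules out both limit cycles and the alternative (c) of Poincar\'e--Bendixson. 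For the decay, rather than the stable manifold theorem, the paper differentiates to obtain $u''=(\beta^2-\omega^2)u+F(u,v)u$ with $F\to 0$, applies a comparison principle to get a preliminary exponential bound, and then uses the Green's function of $-\partial_{xx}+(\beta^2-\omega^2)$ to upgrade to the sharp rate; your stable-manifold approach would also work here.
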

The proof of the above theorem relies on a phase-plane analysis, which allows to get a rather precise description of the shape of the solution.
\smallskip

The next results are the analogue of Theorem \ref{thm:main} and Theorem \ref{thm:main2}, when $\beta$ is a domain wall function.

\begin{thm}\label{thm:main3}
Take an initial datum $\chi\in H^1(\R,\C^2)$ and a domain wall function $\beta\in C^1(\R)$. Consider their discretization $\chi_h:h\Z\to\C^2$, $\beta_h:h\Z\to\R$ defined as in \eqref{eq:discretization}. Let $\psi_h\in C^1_z([0,\infty), L^2_h(\R^2,\C^2)))$ be the unique global solution to \eqref{eq:discretenld} with initial datum $\chi_h$ and mass $\beta_h$.
Then, there exist a constant $C>0$, independent of $\chi_h$ and $h$, such that for every $0<T<(2C\Vert\varphi_h\Vert^2_{H^1_h})^{-1}$for every $0<T<+\infty$, there holds
\[
p_h\psi_h\rightharpoonup\Psi\,,\qquad\mbox{weakly-$\ast$ in} \,\,L^\infty([0,T],H^1(\R,\C^2)) \qquad \mbox{as $h\to0^+$\,,} 
\]
where $\Psi\in C^0([0,\infty), H^1(\R,\C^2))$ is the unique global solution to \eqref{eq:cubicdirac} with initial datum $\chi$.
\end{thm}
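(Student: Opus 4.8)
The plan is to reproduce the scheme used for Theorem~\ref{thm:main}, checking that the $x$-dependent mass $\beta_h$ only produces lower-order terms. It has three stages: (i) uniform-in-$h$ a priori bounds for the discrete solution $\psi_h$ in the discrete Sobolev space $H^1_h$, on a time interval $[0,T]$ whose length depends only on $\Vert\chi\Vert_{H^1}$; (ii) extraction of a weakly-$\ast$ convergent subsequence of the interpolations $p_h\psi_h$, upgraded to strong local convergence by an Aubin--Lions argument; (iii) passage to the limit in \eqref{eq:discretenld} and identification of the limit with the unique solution of \eqref{eq:cubicdirac}.

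\emph{Step 1: a priori estimates.} Pointwise in $x_n$ the matrix $\beta_h(x_n)\sigma_3$ is Hermitian, while $\D_h$ and $G(\psi_h)$ are self-adjoint on $\ell^2_h$, so the discrete charge $z\mapsto\Vert\psi_h(\cdot,z)\Vert_{\ell^2_h}$ is conserved along \eqref{eq:discretenld}, giving a uniform $\ell^2_h$ bound for all $z\ge0$. To control the rest of the $H^1_h$ norm I would estimate $\frac{d}{dz}\Vert\D_h\psi_h\Vert_{\ell^2_h}^2$ (equivalently, differentiate \eqref{eq:discretenld} discretely in $x$ and pair with the result) and use the discrete embedding $H^1_h\hookrightarrow\ell^\infty_h$ to dominate the cubic terms, obtaining $\frac{d}{dz}\Vert\psi_h\Vert_{H^1_h}^2\le C(\Vert\psi_h\Vert_{H^1_h}^2+\Vert\psi_h\Vert_{H^1_h}^4)$, exactly as for constant mass. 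The only genuinely new terms are those where the discrete derivative hits $\beta_h$; since $\beta\in C^1$ with bounded derivative, $\Vert\partial_h\beta_h\Vert_{\ell^\infty_h}\le\Vert\beta'\Vert_{L^\infty}$ and $\Vert\beta_h\Vert_{\ell^\infty_h}\le\beta(\infty)$, so these contribute only terms linear in $\Vert\psi_h\Vert_{H^1_h}$, which are absorbed. A continuation/Gr\"onwall argument then gives $\sup_{z\in[0,T]}\Vert\psi_h(\cdot,z)\Vert_{H^1_h}\le M$ uniformly in $h$ for $0<T<(2C\Vert\chi_h\Vert_{H^1_h}^2)^{-1}$, and since $\Vert\chi_h\Vert_{H^1_h}\lesssim\Vert\chi\Vert_{H^1}$ uniformly in $h$, $T$ may be chosen independent of $h$.

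\emph{Steps 2--3: compactness and limit.} By Step~1 the family $(p_h\psi_h)_h$ is bounded in $L^\infty([0,T],H^1(\R,\C^2))$, so a subsequence converges weakly-$\ast$ to some $\Psi$ in that space. Using \eqref{eq:discretenld} and the same bounds, $\partial_z(p_h\psi_h)$ is bounded in $L^\infty([0,T],H^{-1}_{loc}(\R,\C^2))$ (the cubic term being bounded in $L^\infty_z L^2_x$), so by Aubin--Lions/Arzel\`{a}--Ascoli and the compact embedding $H^1_{loc}\hookrightarrow\hookrightarrow L^2_{loc}$ one obtains, along a further subsequence, $p_h\psi_h\to\Psi$ strongly in $C^0([0,T],L^2_{loc}(\R,\C^2))$. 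This suffices to pass to the limit in the cubic term, $G(p_h\psi_h)\,p_h\psi_h\to G(\Psi)\Psi$ in $L^1_{loc}$; moreover $\D_h(p_h\psi_h)\to\cD\Psi$ in the sense of distributions, as in the proof of Theorem~\ref{thm:main}, and the mass term converges because $q_h\beta_h\to\beta$ locally uniformly --- the discretization \eqref{eq:discretization} of the bounded, uniformly continuous function $\beta$ converging locally uniformly. Testing \eqref{eq:discretenld} against compactly supported test functions and letting $h\to0^+$ shows that $\Psi$ is a distributional solution of \eqref{eq:cubicdirac} with datum $\chi$, hence a strong $H^1$ solution by the regularity already obtained. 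Finally, $\cD+\beta\sigma_3$ is self-adjoint on $L^2(\R,\C^2)$ with domain $H^1$ also for a domain wall $\beta$, and the cubic nonlinearity is locally Lipschitz on $H^1$, so \eqref{eq:cubicdirac} is well posed in $H^1$ on $[0,T]$; its solution with datum $\chi$ being unique, the limit $\Psi$ does not depend on the extracted subsequence, and the whole family $p_h\psi_h$ converges weakly-$\ast$ as $h\to0^+$.

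\emph{Expected main obstacle.} The delicate point is Step~1: since the Dirac energy $\int\langle\psi,(\cD+\beta\sigma_3)\psi\rangle-\tfrac12\int\vert\psi\vert^4$ is sign-indefinite, it cannot be used to close the estimates, so the $H^1_h$ norm must be propagated by a Gr\"onwall inequality, which only survives on the data-dependent interval $[0,T]$ --- this is the origin of the timescale restriction, cf. the Remark after Theorem~\ref{thm:main}. The second technical hurdle is extracting from the time-derivative bound the strong $L^2_{loc}$ convergence needed to handle the cubic nonlinearity. By contrast, the variable mass is a minor modification: once one notes that every new contribution is controlled by $\Vert\beta\Vert_{L^\infty}$ and $\Vert\beta'\Vert_{L^\infty}$, and that $q_h\beta_h\to\beta$ locally uniformly, the domain-wall case follows the constant-mass one essentially verbatim.
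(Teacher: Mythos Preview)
Your proposal is correct and follows the paper's overall scheme: uniform $H^1_h$ bounds via a cubic differential inequality on a data-dependent interval, weak-$\ast$ compactness plus strong local $L^2$ convergence, distributional passage to the limit, and uniqueness of the continuum solution to remove the subsequence. The paper's own proof of Theorem~\ref{thm:main3} simply refers back to that of Theorem~\ref{thm:main} and adds only the verification that the mass term converges.

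Two small technical differences are worth noting. For Step~1 the paper obtains the $H^1_h$ bound through the Duhamel formula and the Bihari inequality (Lemma~\ref{lem:bihari}), using that the propagator of $\D_h+\beta\sigma_3$ preserves the $H^1_h$ norm as a Fourier multiplier; strictly speaking this last point is delicate once $\beta_h$ is non-constant, and your direct energy estimate on $\frac{d}{dz}\Vert\psi_h\Vert_{H^1_h}^2$, with the commutator terms controlled by $\Vert\partial_h\beta_h\Vert_{\ell^\infty_h}\le\Vert\beta'\Vert_{L^\infty}$, handles the variable-coefficient case more transparently. For the mass term, the paper uses that $p_{h_n}\beta_{h_n}-\beta\to0$ in $L^2(\R)$ (the difference being square-integrable even though $\beta$ itself is not), whereas you use local uniform convergence of $q_h\beta_h\to\beta$; since all tests are against compactly supported $u$, both arguments suffice.
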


We mention that in the literature both dynamical systems and variational methods have been successfully used to prove the existence of standing waves for nonlinear Dirac equations, see e.g. \cite{shooting,CCM,es} and references therein. In particular, a variational approach allows to prove an analogous result for equation \eqref{eq:cubicdirac} when $\beta$ is a domain wall function, as in that case the phase plane analysis used for Theorem \ref{thm:main2} does not seem to be applicable.

\begin{thm}\label{thm:main4}
For every $\omega\in(0,\beta(\infty))$, $\omega\notin \operatorname{Spec}(\cD+\beta\sigma_3)$, equation \eqref{eq:stationary} admits a smooth solution exponentially decaying at infinity
\be\label{eq:decay2}
\vert \Phi(x)\vert\leq C e^{-\sqrt{\beta^2(\infty)-\omega^2}\vert x\vert}\,,\qquad \forall x\in\R\,,
\ee
for some constant $C=C(\beta(\infty),\omega)>0$.
\end{thm}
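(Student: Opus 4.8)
The plan is to produce $\Phi$ as a critical point of the energy functional attached to \eqref{eq:stationary} by a linking/generalized Nehari manifold argument for strongly indefinite functionals, in the spirit of the variational treatments of nonlinear Dirac equations (see \cite{es,CCM}), and then to recover smoothness and the decay estimate \eqref{eq:decay2} by bootstrapping on the equation. First I would set $A:=\cD+\beta\sigma_3$, which by \cite{LWW} is self-adjoint on $L^2(\R,\C^2)$ with domain $H^1(\R,\C^2)$ and spectrum $(-\infty,-\beta(\infty)]\cup\{\lambda_i\}_{i=1}^N\cup[\beta(\infty),+\infty)$. Since $\omega\in(0,\beta(\infty))\setminus\operatorname{Spec}(A)$, the operator $A-\omega$ is invertible and $0$ belongs to a gap of its spectrum; I would let $P^{\pm}$ be the spectral projectors onto the positive and negative parts of $A-\omega$ and equip the form domain $E:=H^{1/2}(\R,\C^2)$ with the equivalent norm $\|\Phi\|^2:=\bigl\||A-\omega|^{1/2}\Phi\bigr\|_{L^2}^2$, so that $E=E^+\oplus E^-$ with $E^{\pm}:=P^{\pm}E$ both infinite dimensional. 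Using the one-dimensional embedding $H^{1/2}(\R)\hookrightarrow L^4(\R)$, the functional
\[
\cL(\Phi):=\tfrac12\bigl(\|P^+\Phi\|^2-\|P^-\Phi\|^2\bigr)-\tfrac14\int_{\R}\bigl(|\Phi_1|^4+|\Phi_2|^4\bigr)\,dx
\]
is well defined and of class $C^1$ on $E$, and its critical points are exactly the weak solutions of \eqref{eq:stationary}.

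Next I would exploit the structure of the cubic nonlinearity: $s\mapsto|s|^2s$ is superlinear and subcritical (no growth restriction being needed in dimension one), and $s\mapsto|s|^2$ is strictly increasing, so $\cL$ enjoys the generalized Nehari geometry of Pankov and Szulkin--Weth. Concretely, for every $w\in E^+\setminus\{0\}$ the restriction of $\cL$ to the half-space $\R^+w\oplus E^-$ has a unique critical point, which is its strict global maximum; denoting by $\cM$ the set of all such maximizers, one gets $c:=\inf_{\cM}\cL=\inf_{w\in E^+\setminus\{0\}}\max_{\R^+w\oplus E^-}\cL>0$, from which one obtains a bounded Palais--Smale sequence $(\Phi_n)\subset E$ for $\cL$ at level $c$. (Such a sequence could equivalently be produced by the Kryszewski--Szulkin or Benci--Rabinowitz linking theorem for strongly indefinite functionals.)

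The main obstacle is then to extract a nontrivial limit from $(\Phi_n)$, i.e.\ to restore compactness. Since $\beta$ is a domain wall, \eqref{eq:stationary} is not translation invariant, but it is asymptotic at $\pm\infty$ to the constant-mass problems $(\cD\pm\beta(\infty)\sigma_3-\omega)\Omega=G(\Omega)\Omega$, which are unitarily equivalent via conjugation by the matrix $\sigma_1$ and hence share a common positive generalized Nehari level $c_\infty$, realized (up to this equivalence) by a constant-mass soliton of mass $\beta(\infty)$ as in Theorem \ref{thm:main2}, meaningful since $\omega\in(0,\beta(\infty))$. A splitting/global-compactness analysis adapted to the strongly indefinite setting shows that, up to a subsequence, $\Phi_n\rightharpoonup\Phi_\infty$ in $E$ with $\Phi_\infty$ a (possibly trivial) solution of \eqref{eq:stationary}, and $\Phi_n-\Phi_\infty-\sum_{j=1}^k\Omega^j(\cdot-y_n^j)\to0$ in $E$, with $|y_n^j|\to\infty$, each $\Omega^j$ a solution of a limit problem, and $c=\cL(\Phi_\infty)+\sum_{j=1}^k\cL_\infty(\Omega^j)$. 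Since every nontrivial solution of \eqref{eq:stationary} has energy $\ge c$ and every nontrivial solution of a limit problem has energy $\ge c_\infty>0$, even one escaping bubble forces $c\ge c_\infty$ (if $\Phi_\infty\equiv0$) or $c\ge c+c_\infty>c$ (if $\Phi_\infty\not\equiv0$); hence it suffices to prove the strict inequality $c<c_\infty$. I expect this to be the delicate point of the whole argument, as is typical for asymptotically autonomous indefinite variational problems: it should follow from a test-function computation exploiting that $\beta$ is odd and increasing, so $|\beta(x)|<\beta(\infty)$ for every $x\in\R$, which makes the domain-wall problem strictly ``more attractive'' than its limits. Granted $c<c_\infty$ one gets $k=0$, $\Phi_n\to\Phi_\infty$ strongly in $E$, and $\Phi:=\Phi_\infty\neq0$ solves \eqref{eq:stationary}. (Restricting $\cL$ beforehand to the fixed subspace of a suitable antilinear symmetry of the problem, which one checks leaves $E^{\pm}$ invariant, additionally yields a solution of the form $(u,\imath v)$ with $u,v$ real, though this refinement is not needed here.)

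It remains to upgrade $\Phi$ to a smooth, exponentially decaying function. Since $H^{1/2}(\R)\hookrightarrow L^p(\R)$ for every $p<\infty$ we have $G(\Phi)\Phi\in L^2$, whence $\Phi=(A-\omega)^{-1}\bigl[G(\Phi)\Phi\bigr]\in H^1(\R,\C^2)\hookrightarrow C^0\cap L^\infty$ and $\Phi(x)\to0$ as $|x|\to\infty$; a bootstrap (using that $G(\Phi)\Phi\in H^k$ whenever $\Phi\in H^k\cap L^\infty$) gives $\Phi\in\bigcap_k H^k\subset C^\infty(\R,\C^2)$, so that $\Phi$ satisfies the first-order ODE system $-\imath\sigma_1\Phi'=\bigl(\omega-\beta(x)\sigma_3\bigr)\Phi+G(\Phi)\Phi$ classically. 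Finally, near $+\infty$ (resp.\ $-\infty$) one has $\beta(x)\to\beta(\infty)$ (resp.\ $-\beta(\infty)$) with integrable remainder while $\Phi(x)\to0$; linearizing the ODE at infinity shows that the decaying solutions behave like $e^{-\mu|x|}$ with $\mu=\sqrt{\beta^2(\infty)-\omega^2}$, which gives \eqref{eq:decay2} --- equivalently, this follows from the representation $\Phi=(A-\omega)^{-1}[G(\Phi)\Phi]$ together with the exponential decay of the resolvent kernel of $A-\omega$ inside the spectral gap.
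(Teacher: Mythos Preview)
Your variational setup --- the functional $\cL$ on $H^{1/2}$, the spectral splitting $E=E^+\oplus E^-$ induced by $A-\omega$, the linking/generalized Nehari geometry producing a bounded Palais--Smale sequence --- and your regularity/decay bootstrap at the end match the paper's argument essentially verbatim. The divergence is entirely in how compactness of the Palais--Smale sequence is recovered.

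The paper does \emph{not} run a splitting/global-compactness argument and never compares the minimax level with that of the constant-mass limit problems; in particular it neither states nor proves a strict inequality $c<c_\infty$. After ruling out vanishing via the lower bound $\liminf_n\|\Phi_n\|_{L^4}^4>0$, it simply localizes the sequence around a concentration point with a cutoff, obtaining a new Palais--Smale sequence $\Psi_n$ for $\cL$ that converges strongly in $L^4$; strong $H^{1/2}$-convergence is then read off from $\Psi_n=(A-\omega)^{-1}[G(\Psi_n)\Psi_n]+o(1)$ together with the continuity of $\Psi\mapsto|\Psi|^2\Psi$ from $L^4$ to $L^{4/3}$. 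This produces a nontrivial critical point without insisting that it sit at the minimax level, so no energy comparison with the problems at $\pm\infty$ is needed.

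Your route is more systematic, but it carries the extra burden you yourself flag: the strict inequality $c<c_\infty$. For this problem that step is genuinely nontrivial and is not a routine test-function computation --- the quadratic part is sign-indefinite, the projectors $P^\pm$ for the domain-wall operator differ from those of the limit operators, and the mass enters through $\sigma_3$, so the heuristic ``$|\beta(x)|<\beta(\infty)$ makes the problem more attractive'' does not translate into a monotone comparison of the functionals. As written, your proposal leaves this as an unproved claim; the paper simply sidesteps it by taking the more direct concentration-compactness/localization route.
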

The proof of Theorem \ref{thm:main} and Theorem \ref{thm:main3} follows \cite{KLS}, where the authors prove a similar result for discrete cubic nonlinear Schr\"odinger equations with long-range lattice interactions, whose solutions converge to (possibly fractional) Schr\"odinger equations on the real line. 

We mention that, generally speaking, linear and nonlinear Dirac equations in low dimensions recently attracted a considerable attention in the mathematical literature, starting from the works \cite{FWhoneycomb,wavedirac,FWwaves} on two dimensional honeycomb structures. Natural nonlinear models, as in the present paper, are given by cubic Dirac equations, which are Sobolev critical. Consistency of the effective model and the existence of stationary solutions have been investigated in \cite{arbunichsparber}, \cite{CCM,shooting,borrellifrank}. We mention that, to the best of our knowledge, global well-posedness of the Cauchy problem is not known in that case for Kerr-type nonlinearities, as available results rely on the null-structure of the nonlinearity \cite{bh,bc}, which is absent for Kerr-type nonlinear terms.


\section{Preliminaries}\label{sec:prel}
We collect here notions and basic definitions useful in the sequel.
\subsection{Functional spaces}
Consider the lattice $h\Z$, with spacing $h>0$. We denote 
by $L^2_h$ the Hilbert space of sequences $u_h:h\Z\to\C$ for which the norm $\Vert u_h\Vert_{L^2_h}$ is finite, endowed with the inner product
\[
\langle u_h,v_h\rangle_{L^2_h}:=h\sum_{n\in\Z}\overline{u_h(x_n)}v_h(x_n),.
\]
For $u_h\in L^2_h$, its Fourier transform $\widehat{u_h}:[-\pi,\pi]\to\C$ is defined as
\[
\widehat{u_h}(\xi):=\frac{1}{\sqrt{2\pi}}\sum_{n\in\Z}u_h(x_n)e^{-\imath n\xi}\,,
\]
and there holds $\widehat{u_h}\in L^2([-\pi,\pi])$. The Fourier inversion formula holds
\be\label{eq:inversion}
u_h(x_n)=\frac{1}{\sqrt{2\pi}}\int^\pi_{-\pi}\widehat{u_h}(\xi)e^{\imath n\xi}\,d\xi\,.
\ee
The Sobolev norm is given by
\[
\Vert u_h\Vert^2_{H^1_h}:=h\int^\pi_{-\pi}(1+h^{-2}\vert \xi\vert^2)\vert\widehat{u_h}(\xi)\vert^2\,d\xi,,
\]
and the Sobolev space $H^1_h$ is defined accordingly. The following discrete uniform Sobolev inequality holds.
\begin{lem}
There holds
\[
\Vert u_h\Vert_{L^\infty_h}\leq \frac{\sqrt 2}{2}\Vert u_h\Vert_{H^1_h}\,,
\]
for every $u_h\in H^1_h$, with $\Vert u_h\Vert_{L^\infty_h}:=\sup_{n\in\Z}\vert u_h(x_n)\vert$.
\end{lem}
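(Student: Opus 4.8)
The plan is to work directly on the Fourier side, using the inversion formula \eqref{eq:inversion}. First I would observe that, for any $n\in\Z$, the triangle inequality gives
\[
|u_h(x_n)|\le \frac{1}{\sqrt{2\pi}}\int^\pi_{-\pi}|\widehat{u_h}(\xi)|\,d\xi\,,
\]
so it is enough to bound the right-hand side, which is already independent of $n$. Then I would insert the weight $(1+h^{-2}|\xi|^2)^{1/2}$ together with its reciprocal and apply the Cauchy--Schwarz inequality, obtaining
\[
\int^\pi_{-\pi}|\widehat{u_h}(\xi)|\,d\xi\le \left(\int^\pi_{-\pi}\frac{d\xi}{1+h^{-2}|\xi|^2}\right)^{1/2}\left(\int^\pi_{-\pi}(1+h^{-2}|\xi|^2)|\widehat{u_h}(\xi)|^2\,d\xi\right)^{1/2}\,.
\]
By the very definition of the discrete Sobolev norm, the second factor equals $h^{-1/2}\Vert u_h\Vert_{H^1_h}$.

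The only remaining task is the elementary estimate of the first factor. Changing variables $\xi=h\eta$ yields
\[
\int^\pi_{-\pi}\frac{d\xi}{1+h^{-2}|\xi|^2}=2h\int^{\pi/h}_0\frac{d\eta}{1+\eta^2}=2h\arctan(\pi/h)\le \pi h\,,
\]
using $\arctan\le \pi/2$. Combining the three inequalities gives
\[
|u_h(x_n)|\le \frac{1}{\sqrt{2\pi}}\,(\pi h)^{1/2}\,h^{-1/2}\,\Vert u_h\Vert_{H^1_h}=\frac{1}{\sqrt 2}\Vert u_h\Vert_{H^1_h}=\frac{\sqrt 2}{2}\Vert u_h\Vert_{H^1_h}\,,
\]
and taking the supremum over $n\in\Z$ concludes the proof.

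There is no genuine obstacle here; the argument is a one-line Cauchy--Schwarz computation. The single point worth emphasizing is that the powers of $h$ cancel exactly, so the constant $\tfrac{\sqrt2}{2}$ does not depend on the lattice spacing. This uniformity in $h$ is precisely what is needed when passing to the continuum limit $h\to0^+$ in Theorems \ref{thm:main} and \ref{thm:main3}, since it allows one to control the nonlinear terms $G(\psi_h)\psi_h$ on the lattice uniformly.
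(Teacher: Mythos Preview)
Your proof is correct and follows essentially the same route as the paper: Fourier inversion, Cauchy--Schwarz with the weight $(1+h^{-2}|\xi|^2)^{1/2}$, and the elementary bound $h^{-1}\int_{-\pi}^{\pi}\frac{d\xi}{1+h^{-2}\xi^2}\le\pi$. The only cosmetic difference is that the paper enlarges the domain of integration to $\R$ to evaluate $\int_{\R}\frac{d\eta}{1+\eta^2}=\pi$, whereas you use $\arctan(\pi/h)\le\pi/2$; both yield the same constant.
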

\begin{proof}
By \eqref{eq:inversion} and using the Cauchy-Schwarz inequality we find
\[
\begin{split}
\Vert u_h\Vert_{L^\infty_h}&\leq \frac{1}{\sqrt{2\pi}}\int^\pi_{-\pi}\vert\widehat{u_h}(\xi)\vert\,d\xi\leq \frac{1}{\sqrt{2\pi}}\left(h^{-1}\int^\pi_{-\pi}\frac{d\xi}{1+h^{-2}\vert\xi\vert^2} \right)^{1/2}\Vert u_h\Vert_{H^1_h} \\
&=\frac{1}{\sqrt{2\pi}} \left(\int^\infty_{-\infty}\frac{d\xi}{1+\vert\xi\vert^2} \right)^{1/2}\Vert u_h\Vert_{H^1_h}\leq \frac{\sqrt 2}{2} \Vert u_h\Vert_{H^1_h}\,.
\end{split}
\]
\end{proof}
\subsection{Discretization and interpolation}
Recall the definitions \eqref{eq:discretization} , \eqref{eq:piecewiseconstant} and \eqref{eq:piecewiselinear}.
The following estimates can be found in \cite[Sec. 3.3]{KLS}
\begin{lem}\label{lem:estimates}
For any $f\in H^1(\R)$ there holds
\[
\Vert f_h\Vert_{H^1}\leq C\Vert f\Vert_{H^1}\,,\quad \Vert q_h f_h\Vert_{L^2}\leq\Vert f_h\Vert_{L^2_h}\,,\quad\Vert p_hf_h\Vert_{H^1}\leq C\Vert f_h\Vert_{H^1_h}\,,
\]
for some constant $C>0$.
\end{lem}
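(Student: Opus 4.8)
The plan is to deduce all three bounds from elementary cell-by-cell estimates, after first replacing the Fourier definition of $\Vert\cdot\Vert_{H^1_h}$ by the equivalent quantity $\Vert u_h\Vert^2_{L^2_h}+\Vert\partial_h u_h\Vert^2_{L^2_h}$. Since $\widehat{\partial_h u_h}(\xi)=h^{-1}(e^{\imath\xi}-1)\widehat{u_h}(\xi)$, Parseval gives
\[
\Vert\partial_h u_h\Vert^2_{L^2_h}=h\int^\pi_{-\pi}h^{-2}\,4\sin^2(\xi/2)\,\vert\widehat{u_h}(\xi)\vert^2\,d\xi\,,
\]
and the elementary inequalities $\tfrac{4}{\pi^2}\xi^2\le 4\sin^2(\xi/2)\le\xi^2$ on $[-\pi,\pi]$ show that $\Vert u_h\Vert^2_{L^2_h}+\Vert\partial_h u_h\Vert^2_{L^2_h}$ is comparable to $\Vert u_h\Vert^2_{H^1_h}$ with constants independent of $h$. (I read the first bound as $\Vert f_h\Vert_{H^1_h}\le C\Vert f\Vert_{H^1}$, the discrete norm of the lattice function $f_h$.)

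With this reduction the middle bound is immediate, in fact an equality: since $q_hf_h$ equals the constant $f_h(x_n)$ on each cell $[x_n,x_{n+1})$ of length $h$, one has $\Vert q_hf_h\Vert^2_{L^2}=\sum_n h\,\vert f_h(x_n)\vert^2=\Vert f_h\Vert^2_{L^2_h}$. For the $L^2$ part of the first bound I would use that $f_h(x_n)$ is the average of $f$ over the cell, so Jensen (equivalently Cauchy--Schwarz) gives $\vert f_h(x_n)\vert^2\le h^{-1}\int^{x_{n+1}}_{x_n}\vert f\vert^2$, whence $\Vert f_h\Vert^2_{L^2_h}\le\Vert f\Vert^2_{L^2}$.

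The only genuinely computational point, and the step I expect to be the main obstacle, is controlling $\partial_h f_h$ in the first bound. Writing out the two cell averages and shifting variables yields $f_h(x_{n+1})-f_h(x_n)=h^{-1}\int^{x_{n+1}}_{x_n}\int^{y+h}_y f'(t)\,dt\,dy$, hence $h^2\,\partial_h f_h(x_n)=\int^{x_{n+1}}_{x_n}\int^{y+h}_y f'(t)\,dt\,dy$. Applying Cauchy--Schwarz on this triangular region of area $h^2$ and then enlarging the inner interval to $[x_n,x_{n+2}]$ gives $\vert\partial_h f_h(x_n)\vert^2\le h^{-1}\int^{x_{n+2}}_{x_n}\vert f'(t)\vert^2\,dt$; summing in $n$, each point being covered by two consecutive windows, produces $\Vert\partial_h f_h\Vert^2_{L^2_h}\le 2\Vert f'\Vert^2_{L^2}$. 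Together with the $L^2$ bound and the norm equivalence this yields $\Vert f_h\Vert_{H^1_h}\le C\Vert f\Vert_{H^1}$.

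For the last bound I would note that the derivative of the piecewise linear interpolant is exactly the constant $\partial_h u_h(x_n)$ on each cell, so $\Vert(p_hu_h)'\Vert^2_{L^2}=h\sum_n\vert\partial_h u_h(x_n)\vert^2=\Vert\partial_h u_h\Vert^2_{L^2_h}$. On each cell $p_hu_h$ is the convex combination $(1-s)u_h(x_n)+s\,u_h(x_{n+1})$ with $s=(x-x_n)/h$, so integrating $\vert(1-s)a+sb\vert^2$ over $s\in[0,1]$ and summing gives $\Vert p_hu_h\Vert^2_{L^2}\le\Vert u_h\Vert^2_{L^2_h}$. Invoking the norm equivalence once more yields $\Vert p_hu_h\Vert_{H^1}\le C\Vert u_h\Vert_{H^1_h}$, completing the proof. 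All constants are absolute, in particular independent of $h$, as required.
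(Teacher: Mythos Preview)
Your argument is correct in every step: the norm equivalence via $4\sin^2(\xi/2)\sim\xi^2$, the Jensen bound on cell averages, the double-integral representation of $\partial_h f_h$ with the two-cell overlap, and the convexity estimate for $p_hu_h$ all go through exactly as you indicate, with constants independent of $h$. Your reading of the first inequality as $\Vert f_h\Vert_{H^1_h}\le C\Vert f\Vert_{H^1}$ is also the only sensible one, since $f_h$ is a lattice function.

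There is nothing to compare here: the paper does not supply a proof of this lemma at all, but simply refers to \cite[Sec.~3.3]{KLS}. Your self-contained elementary argument is therefore strictly more than what the paper offers, and is in the same spirit as the cell-by-cell estimates in the cited reference.
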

The above definitions and results immediately extend to vector-valued functions.
\subsection{Dirac operators.}
The discrete Dirac operator $\D_h$ in \eqref{eq:discretedirac} is easily seen to be self-adjoint operator on $L^2_h$, and as such it generates a strongly continuous unitary one-parameter group $(e^{-\imath z\D_h})_{z\in\R}$.

Similarly, the operator $\cD$ in \eqref{eq:dirac} is self-adjoint on $L^2(\R,\C^2)$, with domain and form-domain given by the Sobolev spaces $H^1(\R,\C^2)$ and $H^{1/2}(\R,\C^2)$, respectively. Such properties follows from the fact that the matrix-valued symbol of the operator is
\[
\widehat{\cD}(\xi)=\begin{pmatrix}0 & \xi \\ \xi &0  \end{pmatrix}\,,
\]
see \cite{diracthaller}. As anticipated in the Introduction, when $\beta$ is a positive constant, the spectrum 
\[
\operatorname{\cD+\beta\sigma_3}=(-\infty,-\beta]\cup[\beta,+\infty)\,,
\]
is purely absolutely continuous, with a spectral gap due to the mass term.

If $\beta$ is a domain wall function (see Def. \ref{def:domainwall}), additional finitely many simple eigenvalue can appear in the gap
\[
\operatorname{Spec}(\cD+\beta\sigma_3)=(-\infty,-\beta]\cup[\beta,+\infty)\,,
\]
besides the purely continuous spectrum. We remark that in \cite{LWW} the authors prove that when $\beta(\cdot)=\tanh(\cdot)$, $\lambda=0$ is the unique eigenvalue in the gap.

\section{Estimates for the discrete equation}
The following proofs apply to both the case of a constant mass and of a domain wall. In the former $\beta>0$ is constant, while in the latter $\beta=\beta_h$ is the discretization of a domain wall function, see \eqref{eq:discretization} and Def.\ref{def:domainwall}.
\subsection{Well-posedness}
 \begin{prop}\label{prop:discretegwp}
For every initial datum $\varphi_h\in L^2_h$ there exists a unique global classical solution $\psi_h\in C^1([0,\infty), L^2_h)$ to \eqref{eq:discretenld}, with $\psi_h(0,\cdot)=\varphi_h(\cdot)$. Moreover, the norm 
\[
N(\psi_h):=\Vert \psi\Vert_{L^2_h}
\]
is conserved.
\end{prop}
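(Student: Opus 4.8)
The plan is to prove Proposition~\ref{prop:discretegwp} by a standard fixed-point/energy argument, exploiting the fact that the nonlinear term is locally Lipschitz on $L^2_h$ and that the $L^2_h$-norm is conserved, which upgrades local well-posedness to global well-posedness. First I would rewrite \eqref{eq:discretenld2} in Duhamel (mild) form as
\be
\psi_h(z)=e^{-\imath z(\D_h+\beta\sigma_3)}\varphi_h+\imath\int_0^z e^{-\imath(z-s)(\D_h+\beta\sigma_3)}\,G(\psi_h(s))\psi_h(s)\,ds\,,
\ee
using the fact, recalled in Section~\ref{sec:prel}, that $\D_h$ is bounded and self-adjoint on $L^2_h$ (indeed $\|\D_h\|\le 2/h$) and that $\beta\sigma_3$ is a bounded self-adjoint multiplication operator (in the domain-wall case $\beta_h\in\ell^\infty$ since $\beta$ has bounded derivative and bounded limits); hence $\D_h+\beta\sigma_3$ generates a unitary group $(e^{-\imath z(\D_h+\beta\sigma_3)})_{z\in\R}$ on $L^2_h$.

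Next I would set up the contraction. The key elementary estimate is that the map $\psi_h\mapsto G(\psi_h)\psi_h$ is locally Lipschitz from $L^2_h$ to $L^2_h$: since $\|u_h\|_{L^\infty_h}\le h^{-1/2}\|u_h\|_{L^2_h}$ (from $|u_h(x_n)|^2\le h^{-1}\|u_h\|_{L^2_h}^2$), the cubic term satisfies $\|G(\psi_h)\psi_h-G(\phi_h)\phi_h\|_{L^2_h}\le C(h)(\|\psi_h\|_{L^2_h}^2+\|\phi_h\|_{L^2_h}^2)\|\psi_h-\phi_h\|_{L^2_h}$ for a constant $C(h)$ depending on $h$. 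Combined with the unitarity of the group, a standard Picard iteration on $C([0,T_0],L^2_h)$ with $T_0$ depending on $\|\varphi_h\|_{L^2_h}$ (and on $h$) yields a unique local mild solution; since the right-hand side of \eqref{eq:discretenld2} is then a $C^1$ function of $z$ (being a composition of continuous maps), the mild solution is in fact a classical solution in $C^1([0,T_0),L^2_h)$, and conversely any classical solution is a mild solution, giving uniqueness in the classical class.

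For global existence and the conservation law I would test the equation with $\psi_h$ in $L^2_h$: writing $\frac{d}{dz}\|\psi_h\|_{L^2_h}^2=2\,\mathrm{Re}\,\langle \dot\psi_h,\psi_h\rangle_{L^2_h}=2\,\mathrm{Re}\,\langle -\imath(\D_h+\beta\sigma_3)\psi_h+\imath G(\psi_h)\psi_h,\psi_h\rangle_{L^2_h}$, the self-adjointness of $\D_h+\beta\sigma_3$ kills its contribution (its quadratic form is real, so $\mathrm{Re}\,\langle -\imath(\D_h+\beta\sigma_3)\psi_h,\psi_h\rangle=0$), and $G(\psi_h)$ is a self-adjoint (diagonal, real, nonnegative) matrix at each site, so $\mathrm{Re}\,\langle \imath G(\psi_h)\psi_h,\psi_h\rangle=0$ as well; hence $\frac{d}{dz}\|\psi_h\|_{L^2_h}^2=0$ and $N(\psi_h)$ is conserved. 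This a priori bound keeps $\|\psi_h(z)\|_{L^2_h}=\|\varphi_h\|_{L^2_h}$ uniformly in $z$, so the local existence time $T_0$ can be taken uniform along the flow and the solution extends to all $z\ge 0$ by the usual continuation argument. The main (very mild) obstacle is purely bookkeeping: making sure the Lipschitz constant $C(h)$ and hence the local time $T_0$ are controlled solely by $\|\varphi_h\|_{L^2_h}$ (for fixed $h$) so that the blow-up alternative applies, and checking that the domain-wall mass $\beta_h$ is genuinely bounded so that $\D_h+\beta\sigma_3$ is a bounded generator; both are immediate from Definition~\ref{def:domainwall} and the discrete Sobolev embedding.
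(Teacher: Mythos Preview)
Your proposal is correct and follows essentially the same approach as the paper: Duhamel formulation, local well-posedness via a contraction argument using the embedding $L^2_h\hookrightarrow L^\infty_h$ to get local Lipschitz continuity of the cubic term, conservation of the $L^2_h$-norm by testing the equation against $\psi_h$ and using self-adjointness, and global extension via the blow-up alternative. Your write-up is in fact slightly more detailed (explicit Lipschitz constant, explicit reason why each term in $\frac{d}{dz}\|\psi_h\|^2_{L^2_h}$ vanishes), but the argument is the same.
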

\begin{proof}
Rewrite \eqref{eq:discretenld} in Duhamel form 
\be\label{eq:discreteDuhamel}
\psi_h(z)=e^{-\imath z(\D_h+\beta\sigma_3)}\varphi_h-\imath\int^z_0 e^{-\imath(z-s)(\D_h+\beta\sigma_3)}G(\psi_h(x))\psi_h(s)\,ds\,.
\ee
The map $\psi_h\mapsto G(\psi_h)\psi_h$ is locally Lipschitz in $L^2_h$, by the embedding $L^2_h\subseteq L^\infty_h$. A standard fixed-point argument gives local well-posedness in $L^2_h$, that is, there exists $0<T\leq+\infty$ and a solution $\psi_h\in C^1([0,T), L^2_h)$, with $\psi_h(0)=\varphi_h$. Global extendability of solutions follows from the conservation of the norm, arguing as follows. Assume that $T<+\infty.$ It suffices to show that the $L^2_h$-norm of the solution remains bounded as $z\to T^-$. Suppose, for the moment, that
\be\label{eq:ell^2bound}
\sup_{t\in[0,T)}\Vert \psi_h \Vert_{L^2_h}\leq M<+\infty\,,
\ee
and let $\tau=\tau(M)$ be the lifespan provided by the local well-posedness proof. Choose $0<s\leq T$ such that $T-s<\tau$. Then the Cauchy problem \eqref{eq:cubicdirac} with initial datum $\psi_h(s)$ at $t=s$ has a local solution up to $z=s+\tau$. By uniqueness we then conclude that $\psi_h(\cdot)$ can be extended up to $z=s+\tau>T$, contradicting the maximality of $T$.

The conservation of the $L^2_h$-norm follows by a direct computation using \eqref{eq:discretenld2}. Indeed
\[
\begin{split}
\frac{d}{dz}\Vert\psi_h\Vert^2_{L^2_h}&=\Re(\partial_z\psi_h,\psi_h)_{L^2_h} \\
&=\Re(\imath(\D_h+\beta\sigma_3)\psi_h,\psi_h)_{L^2_h}-\Re\gamma(\imath G(\psi_h)\psi_h,\psi_h)_{L^2_h}\\
&=0
\end{split}
\]
where the first term on the r.h.s. vanishes by the self-adjointness of $\D_h$.
\end{proof}
\subsection{Auxiliary results}
In this section we collect results used in the proof of the main Theorem. We preliminarily state the following nonlinear Gronwall-type result \cite{bihari}, that we use to obtain a priori estimates for solutions to \eqref{eq:discretenld}.
\begin{lem}[Bihari inequality]\label{lem:bihari}
Let $u,f:[0,\infty)\to[0,\infty)$ be continuous functions and $w:[0,\infty)\to[0,\infty)$ continuous non-decreasing such that $w(u)>0$ on $(0,\infty)$. If there holds
\be\label{eq:ineq}
u(t)\leq M+\int^t_0f(s)w(u(s))\,ds\,, \qquad t\geq0
\ee
where $M\geq0$ is a constant, then
\be\label{eq:biharibound}
u(t)\leq G^{-1}\left(G(M)+\int^t_0 f(s)\,ds \right)\,,\qquad t\in[0,T].
\ee
The function $G$ is defined by 
\[
G(x)=\int^x_{x_0}\frac{dy}{w(y)}\,,\qquad x\geq0,x>0\,,
\]
and $G^{-1}$ is its inverse, while $T>0$ is such that
\be\label{eq:inversedomain}
G(M)+\int^t_0 f(s)\,ds \in\operatorname{Dom}(G^{-1})\,,\qquad \forall t\in[0,T]\,.
\ee
\end{lem}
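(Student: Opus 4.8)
The final statement to prove is the Bihari inequality (Lemma~\ref{lem:bihari}). Here is my plan.

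The plan is to reduce the integral inequality \eqref{eq:ineq} to a differential inequality for an auxiliary function, and then integrate. First I would treat the case $M>0$ (the case $M=0$ follows by taking $M\to0^+$, using continuity of $G^{-1}$ on its domain, or by replacing $M$ with $\eps>0$ and letting $\eps\to0$). Set
\[
U(t):=M+\int^t_0 f(s)w(u(s))\,ds\,,
\]
so that $U$ is non-decreasing, $U(0)=M>0$, $U$ is $C^1$ with $U'(t)=f(t)w(u(t))$, and by hypothesis $u(t)\le U(t)$ for all $t$. Since $w$ is non-decreasing and positive on $(0,\infty)$, and $U(t)\ge M>0$, we get $w(u(t))\le w(U(t))$ with $w(U(t))>0$. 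Hence
\[
U'(t)=f(t)w(u(t))\le f(t)w(U(t))\,,
\]
and dividing by $w(U(t))>0$ gives $\dfrac{U'(t)}{w(U(t))}\le f(t)$.

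Next I would recognize the left-hand side as $\dfrac{d}{dt}G(U(t))$, where $G(x)=\int^x_{x_0}\frac{dy}{w(y)}$ (taking $x_0=M$, say, so that $G$ is well-defined and $C^1$ near $M$ since $w$ is continuous and positive there; $G$ is strictly increasing wherever $w>0$, hence invertible onto its range). Integrating from $0$ to $t$ yields
\[
G(U(t))-G(M)=G(U(t))-G(U(0))\le \int^t_0 f(s)\,ds\,,
\]
that is $G(U(t))\le G(M)+\int^t_0 f(s)\,ds$. Applying the increasing function $G^{-1}$ to both sides — which is legitimate precisely for those $t\in[0,T]$ for which the right-hand side lies in $\operatorname{Dom}(G^{-1})$, i.e. condition \eqref{eq:inversedomain} — gives $U(t)\le G^{-1}\big(G(M)+\int^t_0 f(s)\,ds\big)$. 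Combining with $u(t)\le U(t)$ gives the claimed bound \eqref{eq:biharibound}.

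The main technical point to be careful about is the monotonicity/invertibility bookkeeping for $G$: one must ensure that $U(t)$ stays in the range where $w>0$ so that $G$ is strictly increasing there and $G^{-1}$ is well-defined and monotone, which is exactly guaranteed by the assumption $w>0$ on $(0,\infty)$ together with $U(t)\ge M>0$; and one must restrict to the interval $[0,T]$ on which \eqref{eq:inversedomain} holds, so that applying $G^{-1}$ is meaningful. The passage to $M=0$ is the only genuinely delicate limiting step, and it is handled by monotone convergence: for each $\eps>0$ the bound holds with $M$ replaced by $M+\eps$, and letting $\eps\to0^+$ one uses that $G(M+\eps)\to G(0^+)$ (finite or $-\infty$) and that $G^{-1}$ is continuous and increasing on its domain to recover the stated inequality whenever the right-hand side makes sense.
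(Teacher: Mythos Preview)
Your argument is correct and is exactly the standard proof of the Bihari inequality. Note, however, that the paper does not actually give a proof of this lemma: it is stated as a known preliminary result with a reference to \cite{bihari}, so there is no ``paper's own proof'' to compare against here.
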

\begin{lem}\label{lem:apriori}
Let $\psi\in C^1([0,\infty),L^2_h)$ be a solution to \eqref{eq:discretenld}, with $\psi_h(0,\cdot)=\varphi_h(\cdot)$. Then there exists $C>0$ such that for any $0<T<(2C\Vert \varphi_h\Vert^2_{H^1_h})^{-1}$ there holds
\be\label{eq:aprioriH^1}
\sup_{z\in[0,T]}\Vert\psi_h(z) \Vert_{H^1_h}\leq A(T,\Vert\varphi_h\Vert_{H^1_h})\,,
\ee
where $A(T,\Vert \varphi\Vert_{H^1_h})=\frac{1}{(\Vert\varphi\Vert^{-2}_{H^1_h}-2CT)^{1/2}}$
\end{lem}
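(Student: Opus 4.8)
The plan is to prove the a priori $H^1_h$ bound \eqref{eq:aprioriH^1} by differentiating $\|\psi_h(z)\|_{H^1_h}^2$ along the flow, estimating the nonlinear contribution by means of the discrete uniform Sobolev embedding $H^1_h\hookrightarrow L^\infty_h$, and then closing the resulting differential inequality with the Bihari lemma (Lemma \ref{lem:bihari}) applied with $w(u)=u^2$, which produces exactly the blow-up time $(2C\|\varphi_h\|_{H^1_h}^2)^{-1}$ and the claimed function $A$. The key structural input is that the linear part of \eqref{eq:discretenld2}, namely $\D_h+\beta\sigma_3$, is self-adjoint on $L^2_h$ (and $\D_h$ commutes with the Fourier multiplier defining the $H^1_h$-norm, while $\beta$ contributes a bounded, though possibly $n$-dependent, perturbation), so it does not produce growth of the $H^1_h$-norm; only the cubic term $G(\psi_h)\psi_h$ does.

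Concretely, I would first recall that $\|\psi_h\|_{H^1_h}^2 = \|\psi_h\|_{L^2_h}^2 + \|\partial_h\psi_h\|_{L^2_h}^2$ up to equivalence of norms, and compute $\tfrac{d}{dz}\|\psi_h\|_{H^1_h}^2 = 2\Re\langle (1+\D_h^*\D_h)\partial_z\psi_h,\psi_h\rangle$ (in Fourier variables, $2\Re\int(1+h^{-2}|\xi|^2)\overline{\widehat{\partial_z\psi_h}}\,\widehat{\psi_h}\,d\xi$). Substituting \eqref{eq:discretenld2}, the term coming from $\D_h+\beta\sigma_3$ is purely imaginary after pairing because that operator is self-adjoint and (for constant $\beta$) commutes with $(1+\D_h^*\D_h)$; in the domain-wall case one must also control the commutator $[\beta_h,\D_h]$, which is bounded by $\|\beta'\|_{L^\infty}$ uniformly in $h$ (this is where Definition \ref{def:domainwall}'s bounded-derivative hypothesis enters), contributing only a linear-in-$\|\psi_h\|_{H^1_h}^2$ term. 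The genuinely nonlinear term is $2\Re\langle \imath(1+\D_h^*\D_h)^{1/2}(G(\psi_h)\psi_h),(1+\D_h^*\D_h)^{1/2}\psi_h\rangle$; estimating it via Cauchy–Schwarz and the algebra-type bound $\|G(\psi_h)\psi_h\|_{H^1_h}\leq C\|\psi_h\|_{L^\infty_h}^2\|\psi_h\|_{H^1_h}\leq C\|\psi_h\|_{H^1_h}^3$ (using the Sobolev lemma) gives
\[
\frac{d}{dz}\|\psi_h(z)\|_{H^1_h}^2 \leq 2C\,\|\psi_h(z)\|_{H^1_h}^4 .
\]
Integrating, $\|\psi_h(z)\|_{H^1_h}^2 \leq \|\varphi_h\|_{H^1_h}^2 + 2C\int_0^z \|\psi_h(s)\|_{H^1_h}^4\,ds$, so Lemma \ref{lem:bihari} with $u(z)=\|\psi_h(z)\|_{H^1_h}^2$, $f\equiv 2C$, $w(y)=y^2$ yields $G(x)=\int_{x_0}^x y^{-2}dy$, hence $u(z)\leq (\,\|\varphi_h\|_{H^1_h}^{-2}-2Cz\,)^{-1}$ on $[0,T]$ for $T<(2C\|\varphi_h\|_{H^1_h}^2)^{-1}$, which is precisely \eqref{eq:aprioriH^1} with $A$ as stated. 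A separate remark is that the bound must be justified rigorously at the level of regularity $C^1([0,\infty),L^2_h)$; since $L^2_h$ functions with finite $H^1_h$-norm automatically have all the regularity needed and the equation propagates $H^1_h$ (a short bootstrap using Duhamel \eqref{eq:discreteDuhamel} and the boundedness of $e^{-\imath z(\D_h+\beta\sigma_3)}$ on $H^1_h$), the formal differentiation is legitimate.

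The main obstacle I anticipate is the domain-wall case: unlike for constant $\beta$, the multiplier $\beta_h\sigma_3$ does not commute with the $H^1_h$ Fourier multiplier, so one cannot simply discard it. The fix is to show the commutator $[\,(1+h^{-2}|\xi|^2)^{1/2}, \beta_h\,]$ — equivalently $[\partial_h,\beta_h]$ on $L^2_h$ — is bounded uniformly in $h$ by $\|\beta'\|_{L^\infty(\R)}$, using that $\partial_h$ is a difference quotient and $\beta_h$ the discretization of a $C^1$ function with bounded derivative (mean value theorem on each cell of $h\Z$). This contributes a term $\lesssim \|\psi_h\|_{H^1_h}^2$ to the differential inequality, i.e. replaces $2Cu^2$ by $2Cu^2 + C'u$; since $w(y)=y^2$ still dominates near infinity one may absorb the linear term (e.g. $C'u\leq u^2 + (C')^2/4$, or simply enlarge $C$ on the bounded range relevant before blow-up) and recover the same conclusion, possibly with a slightly different constant $C$. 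The rest — the $L^2_h$ cancellation, the algebra estimate, the Bihari bookkeeping — is routine.
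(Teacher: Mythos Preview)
Your argument is correct and reaches the same conclusion as the paper, but the route differs. The paper does not differentiate $\Vert\psi_h(z)\Vert_{H^1_h}^2$ along the flow; instead it works directly with the Duhamel formula \eqref{eq:discreteDuhamel}, observes that the propagator $e^{-\imath z(\D_h+\beta\sigma_3)}$ is a Fourier multiplier and hence preserves an equivalent $H^1_h$-norm, and obtains the integral inequality
\[
\Vert\psi_h(z)\Vert_{H^1_h}\leq \Vert\varphi_h\Vert_{H^1_h}+C\int_0^z\Vert\psi_h(s)\Vert_{H^1_h}^3\,ds\,.
\]
Bihari's lemma is then applied to $u=\Vert\psi_h\Vert_{H^1_h}$ with $w(u)=u^3$, whereas you apply it to $u=\Vert\psi_h\Vert_{H^1_h}^2$ with $w(u)=u^2$; the two computations of $G$ and $G^{-1}$ produce identical bounds and blow-up times. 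Your direct-differentiation approach is arguably more transparent in the domain-wall case: you explicitly isolate the commutator $[\partial_h,\beta_h]$ and bound it by $\Vert\beta'\Vert_{L^\infty}$, while the paper's assertion that the full propagator is a Fourier multiplier is, strictly speaking, only valid for constant $\beta$ (for $\beta_h$ variable the operator $\beta_h\sigma_3$ is not a Fourier multiplier), so your treatment is in fact cleaner on this point. Conversely, the Duhamel route avoids having to justify the differentiation of $\Vert\psi_h(z)\Vert_{H^1_h}^2$ and needs only the algebra estimate $\Vert G(\psi_h)\psi_h\Vert_{H^1_h}\leq C\Vert\psi_h\Vert_{H^1_h}^3$, which you also use.
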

\begin{proof}
Fix $T>0$, to be chosen later, and consider the Duhamel formula \eqref{eq:discreteDuhamel}. Notice that the following Parseval identity holds
\[
\Vert \psi_h\Vert^2_{L^2}+\Vert\partial_h\psi_h\Vert^2_2=h\int^\pi_{-\pi}(1+4\sin^2\xi)\vert\widehat{\psi_h}(\xi)\,\vert^2\,d\xi
\]
as a direct computation using the definition of the Fourier transform shows that
\[
\widehat{\partial_h\psi_h}(\xi)=2\imath(\sin\xi)\widehat{\psi_h}(\xi)\,.
\]

Then, being a Fourier multiplier, the propagator $(e^{-\imath z\D_h+\beta\sigma_3})_{z\in\R}$ preserves the $H^1_h$ norm. Then we get
\[
\Vert \psi_h(z)\Vert_{H^1_h}\leq\Vert\varphi_h\Vert_{H^1_h}+\int^z_0\Vert  G(\psi_h)\psi_h\Vert_{H^1_h}\,ds\,.
\]
Recall that $G(\psi)\sim\vert\psi\vert^2$. Moreover, the following Leibniz rule holds
\[
\partial_h(fg)(x_n)=(\partial_h f)(x_n)g(x_{n+1})+f(x_n)\partial_hg(x_n)\,,
\]
so that 
\[
\partial_h(G(\psi_h)\psi_h)(x_n)\sim(\partial_h \vert\psi_h\vert^2)(x_n)\psi_h(x_{n+1})+\vert\psi_h\vert^2(x_n)\partial_h\psi_h(x_n)\,.
\]
The embedding $L^\infty_h\hookrightarrow H^1_h$  thus implies that $H^1_h$ is an algebra and 
\[
\Vert G(\psi_h)\psi_h\Vert_{H^1_h}\leq C\Vert\psi_h\Vert^3_{H^1_h}\,,
\]
for some constant $C>0$. Then we get
\be\label{eq:applybihari}
\Vert \psi_h(z)\Vert_{H^1_h}\leq \Vert\varphi_h\Vert_{H^1_h}+C\int^z_0\Vert \psi_h(s)\Vert^3_{H^1_h}\,ds\,.
\ee
We can now apply Lemma \ref{lem:bihari} with $M:=\Vert\varphi_h\Vert_{H^1_h}$, $f:=C$ and $w(u)=u^3$. In this case
\[
G(x)=\frac{1}{2}\left(\frac{1}{x^2_0}-\frac{1}{x^2} \right)\,,\qquad G^{-1}(y)=\sqrt{\frac{1}{\frac{1}{x^2_0}-2y}}
\]
and we choose $x_0=\Vert\varphi_h\Vert_{H^1_h}$, so by \eqref{eq:inversedomain} we get 
\[
T<(2C\Vert \varphi_h\Vert^2_{H^1_h})^{-1}
\]
and the a priori bound \eqref{eq:aprioriH^1}.
\end{proof}

The next result relates the discrete Dirac operator $\D_h$ in \eqref{eq:discretedirac} to the continuous operator $\cD$ in \eqref{eq:dirac}, in the limit as $h\to 0^+$.
\begin{lem}\label{lem:convergenceoperator}
For every $\varphi\in C^\infty_c(\R,\C^2)$ there holds
\[
\D_h\varphi\to\cD\varphi\,,
\]
in $L^2(\R)$, as $h\to0^+$.
\end{lem}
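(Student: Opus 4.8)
The plan is to show that the discrete Dirac operator, when applied to a fixed smooth compactly supported spinor, converges to the continuum Dirac operator in $L^2(\R)$, essentially because the discrete forward/backward difference quotients $\partial_h$, $\partial_h^*$ converge to $\partial_x$ on such test functions. The subtlety is purely one of matching the discrete and continuous pictures: $\D_h$ acts on lattice functions in $L^2_h$, while $\cD$ acts on functions on $\R$, so the statement $\D_h\varphi\to\cD\varphi$ in $L^2(\R)$ should be understood via the interpolation/restriction maps of Section \ref{sec:prel}. Concretely, I would interpret $\D_h\varphi$ as (say) the piecewise constant interpolation $q_h$ of $\D_h$ applied to the discretization $\varphi_h$ of $\varphi$, and then compare it with $\cD\varphi$ in $L^2(\R,\C^2)$.

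First I would write $\varphi=(\varphi_1,\varphi_2)$ with each $\varphi_j\in C^\infty_c(\R,\C)$, so both components are uniformly $C^2$ with support in some fixed interval $[-R,R]$; only $O(R/h)$ lattice points contribute, and on the remaining points everything vanishes for $h$ small. Next, on each relevant node $x_n$ I would Taylor-expand: $(\partial_h\varphi_j)(x_n)=h^{-1}[\varphi_j(x_{n+1})-\varphi_j(x_n)]=\varphi_j'(x_n)+O(h\Vert\varphi_j''\Vert_{L^\infty})$, and similarly $(\partial_h^*\varphi_j)(x_n)=\varphi_j'(x_n)+O(h\Vert\varphi_j''\Vert_{L^\infty})$. (If one instead uses the averaged discretization \eqref{eq:discretization} for $\varphi_h$, the same expansion holds with the pointwise value replaced by a value at a nearby point, still with an $O(h)$ error in sup norm, by the smoothness of $\varphi$.) Hence, componentwise, $\D_h\varphi_h(x_n)=\cD\varphi(x_n)+O(h)$ uniformly in $n$, with the implied constant controlled by $\Vert\varphi\Vert_{C^2}$ and the error supported in a fixed compact set.

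Then I would pass from this pointwise/sup-norm bound to the $L^2(\R)$ statement. Using Lemma \ref{lem:estimates}, the piecewise constant interpolation satisfies $\Vert q_h(\D_h\varphi_h)-q_h((\cD\varphi)_h)\Vert_{L^2}\le\Vert\D_h\varphi_h-(\cD\varphi)_h\Vert_{L^2_h}$, and since the difference is $O(h)$ in $\ell^\infty_h$ and supported on $O(1/h)$ nodes each of length $h$, the squared $L^2_h$ norm is $O(h)\cdot O(h^2)\cdot O(1/h)=O(h^2)\to0$. Separately, $q_h((\cD\varphi)_h)\to\cD\varphi$ in $L^2(\R)$ as $h\to0^+$ because $\cD\varphi$ is continuous with compact support, so its piecewise-constant sampled interpolant converges uniformly (and hence in $L^2$) to $\cD\varphi$. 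Combining the two limits by the triangle inequality gives $q_h(\D_h\varphi_h)\to\cD\varphi$ in $L^2(\R,\C^2)$, which is the assertion.

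The only real obstacle is bookkeeping rather than mathematics: being careful about \emph{which} identification between lattice functions and functions on $\R$ is meant (pointwise sampling versus the averaged discretization \eqref{eq:discretization}, and piecewise-constant versus piecewise-linear interpolation), and checking that the boundary nodes near $\pm R$ do not spoil the estimate — they do not, since for $h$ small enough the one extra node on each side still carries an $O(h)$ value on an interval of length $h$. Everything else is a one-term Taylor expansion plus the elementary interpolation estimates already recorded in Lemma \ref{lem:estimates}.
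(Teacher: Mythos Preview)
Your argument is correct, but it differs from the paper's in two respects. First, and most importantly, you are over-interpreting the statement: in the paper $\D_h\varphi$ does \emph{not} involve discretization and interpolation at all. The finite differences $\partial_h$, $\partial_h^*$ make perfect sense applied to a smooth function on the whole line (see the formula $\partial_h v(x)=h^{-1}[v(x+h)-v(x)]$ right after \eqref{eq:piecewiselinear}), so $\D_h\varphi$ is already a function in $L^2(\R,\C^2)$ and the claim is a direct $L^2$ comparison with $\cD\varphi$. All your bookkeeping with $q_h$, $\varphi_h$, and boundary nodes is unnecessary, though not wrong; indeed the commutation $(\D_h u_h)(x_n)=(\D_h u)_h(x_n)$ is used separately in the proof of Theorem~\ref{thm:main}. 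Second, the paper's proof works on the Fourier side: it writes $\mathcal{F}(\partial_h\varphi)(\xi)$ and $\mathcal{F}(\partial_h^*\varphi)(\xi)$ as multipliers $(e^{\imath h\xi}-1)/(h\xi)$ and $(1-e^{-\imath h\xi})/(h\xi)$ acting on $\xi\widehat{\varphi}(\xi)$, observes that both multipliers tend pointwise to $\imath$, and concludes by dominated convergence since $\xi\widehat{\varphi}\in L^2$. Your physical-space Taylor expansion is more elementary and gives a quantitative $O(h)$ rate that the Fourier argument does not immediately yield; the Fourier route is slicker but relies on the fact that the operators are translation-invariant.
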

\begin{proof}
Recall the definition of $\D_h$ in \eqref{eq:discretedirac}. Then a simple computation using the Fourier transform gives
\[
\mathcal F(\partial_h\varphi)(\xi)=\left[\frac{e^{\imath h \xi}-1}{h\xi}\right]\xi\widehat{\varphi}(\xi)\,,\quad \mathcal F(\partial^*_h\varphi)(\xi)=\left[\frac{1-e^{-\imath h \xi}}{h\xi}\right]\xi\widehat{\varphi}(\xi)\,.
\]
Observe that 
\[
\frac{e^{\imath z}-1}{z}=\frac{z}{2}+o(z)+\imath \sin z\,,\quad\frac{1-e^{-\imath z}}{z}=-\frac{z}{2}+o(z)+\imath \sin z\,,\quad\mbox{as $z\to0$}\,,
\]
and then 
\[
\mathcal F(\D_h\varphi)(\xi)\to\mathcal F(\cD\varphi)(\xi)\,, 
\]
pointwise for all $\xi\in\R$, as $h\to0^+$. Since $\xi\widehat{\varphi}(\xi)\in L^2(\R,\C^2)$ as $\varphi$ is smooth and compactly supported, the desired result
\[
\lim_{h\to0^+}\int_\R \vert\D_h\varphi - \cD\varphi\vert^2\,dx=0
\]
follows by dominated convergence.
\end{proof}
We conclude the section with another technical result \cite[Chapter VI, Lemma 4.1]{Lady}.
\begin{lem}\label{lem:interpolationconvergence}
For any $f\in L^2(\R,\C^2)$ and $g\in H^1(\R,\C^2)$, there holds
\[
\Vert p_hf_h-f\Vert_{L^2}\to0\quad\mbox{and}\quad \Vert p_hg_h-g\Vert_{H^1}\qquad \mbox{as $h\to0^+$.}
\]
\end{lem}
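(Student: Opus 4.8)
The plan is to reduce both statements to the case of a smooth, compactly supported function by combining the density of $C^\infty_c(\R,\C^2)$ in $L^2$ (resp. in $H^1$) with uniform-in-$h$ bounds for the linear map $f\mapsto p_hf_h$; since both discretization and piecewise-linear interpolation act componentwise, it suffices to treat scalar functions. First I would record the operator bounds
\[
\Vert p_hf_h\Vert_{L^2}\le C\Vert f\Vert_{L^2}\,,\qquad \Vert p_hg_h\Vert_{H^1}\le C\Vert g\Vert_{H^1}\,,
\]
with $C$ independent of $h$. The second is immediate from Lemma \ref{lem:estimates}, which gives $\Vert p_hg_h\Vert_{H^1}\le C\Vert g_h\Vert_{H^1_h}\le C'\Vert g\Vert_{H^1}$. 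For the first, on each cell $[x_n,x_{n+1})$ the function $p_hf_h$ is a convex combination of $f_h(x_n)$ and $f_h(x_{n+1})$, so by convexity of $s\mapsto s^2$ one gets $\Vert p_hf_h\Vert_{L^2}^2\le\Vert f_h\Vert_{L^2_h}^2$, while $\Vert f_h\Vert_{L^2_h}\le\Vert f\Vert_{L^2}$ by Cauchy-Schwarz on each cell.

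Next, given $\eps>0$, I would pick $\phi\in C^\infty_c(\R,\C^2)$ with $\Vert f-\phi\Vert_{L^2}<\eps$ (resp. $\Vert g-\phi\Vert_{H^1}<\eps$), and use the linearity of $f\mapsto p_hf_h$ together with the above bounds to write
\[
\Vert p_hf_h-f\Vert_{L^2}\le\Vert p_h(f-\phi)_h\Vert_{L^2}+\Vert p_h\phi_h-\phi\Vert_{L^2}+\Vert\phi-f\Vert_{L^2}\le(C+1)\eps+\Vert p_h\phi_h-\phi\Vert_{L^2}\,,
\]
and analogously in $H^1$, so that it remains to prove $p_h\phi_h\to\phi$ in $L^2$ (resp. in $H^1$) for fixed $\phi\in C^\infty_c$, after which letting $\eps\to0$ concludes. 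For the $L^2$ part, since $\phi_h(x_n)$ is the average of $\phi$ over $[x_n,x_{n+1}]$, uniform continuity of $\phi$ gives $\Vert p_h\phi_h-\phi\Vert_{L^\infty}\le 2\,\omega_\phi(h)\to0$, where $\omega_\phi$ is the modulus of continuity of $\phi$; as all these functions are supported in a common compact set, this yields $L^2$ convergence. For the derivative, one checks that $(p_h\phi_h)'(x)=(\partial_h\phi_h)(x_n)$ on $[x_n,x_{n+1})$, and that $(\partial_h\phi_h)(x_n)=h^{-1}\int_{x_n}^{x_{n+1}}(\partial_h\phi)(y)\,dy$ is exactly the discretization of the difference quotient $\partial_h\phi(y)=h^{-1}(\phi(y+h)-\phi(y))$. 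Since $\partial_h\phi\to\phi'$ uniformly with supports in a fixed compact set, its piecewise-constant interpolation converges to $\phi'$ in $L^2$, and together with $p_h\phi_h\to\phi$ in $L^2$ this gives $p_h\phi_h\to\phi$ in $H^1$.

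All the estimates involved are elementary, so I do not expect a genuine obstacle; the only point needing a little care is the $H^1$ statement, namely identifying $(p_h\phi_h)'$ with a discretization of $\partial_h\phi$, checking $\partial_h\phi\to\phi'$ in $L^2$, and keeping track of the common compact support that upgrades uniform convergence to $L^2$ convergence. Alternatively, one may simply invoke \cite[Chapter VI, Lemma 4.1]{Lady} directly, which is the route taken here.
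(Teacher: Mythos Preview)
Your proposal is correct and, in fact, more detailed than what the paper does: the paper gives no proof at all and simply cites \cite[Chapter VI, Lemma 4.1]{Lady}, exactly the reference you mention at the end. Your density-plus-uniform-bounds argument is the standard self-contained proof of this classical fact, and all the steps (the convexity bound for $\Vert p_hf_h\Vert_{L^2}$, the identification $(p_h\phi_h)'=q_h(\partial_h\phi)_h$, and the uniform convergence on a fixed compact support) go through as you describe.
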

\section{Well-posedness of the limit model} 
In this section we prove global well-posedness for the Cauchy problem for \eqref{eq:cubicdirac}. To our knowledge, the basic idea of the proof can be traced back to \cite{goodman}, see also \cite{pely}.
\begin{prop}\label{eq:gwp}
Let $\chi\in H^1(\R,\C^2)$. Then \eqref{eq:cubicdirac} admits a unique global solution $\Psi\in C^0([0,\infty),H^1(\R,\C^2))\cap C^1([0,\infty),L^2(\R,\C^2))$.
\end{prop}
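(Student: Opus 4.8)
The plan is to prove global well-posedness for \eqref{eq:cubicdirac} in $H^1(\R,\C^2)$ by combining the conservation of the $L^2$-norm with an $H^1$ a priori bound obtained via a Gronwall/Bihari argument, exactly paralleling the discrete analysis of Proposition \ref{prop:discretegwp} and Lemma \ref{lem:apriori}, but now with the advantage that here we can iterate and globalize. First I would rewrite \eqref{eq:cubicdirac} in Duhamel form
\[
\Psi(z)=e^{-\imath z(\cD+\beta\sigma_3)}\chi-\imath\int^z_0 e^{-\imath(z-s)(\cD+\beta\sigma_3)}G(\Psi(s))\Psi(s)\,ds\,,
\]
using that $\cD+\beta\sigma_3$ is self-adjoint on $L^2(\R,\C^2)$ with domain $H^1(\R,\C^2)$ (established in Section \ref{sec:prel}), so that $(e^{-\imath z(\cD+\beta\sigma_3)})_{z\in\R}$ is a strongly continuous unitary group on both $L^2$ and $H^1$. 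The nonlinearity $\Psi\mapsto G(\Psi)\Psi$ is a cubic polynomial map, and since $H^1(\R)\hookrightarrow L^\infty(\R)$ and $H^1(\R)$ is a Banach algebra in one dimension, one has $\Vert G(\Psi)\Psi-G(\widetilde\Psi)\widetilde\Psi\Vert_{H^1}\lesssim (\Vert\Psi\Vert_{H^1}^2+\Vert\widetilde\Psi\Vert_{H^1}^2)\Vert\Psi-\widetilde\Psi\Vert_{H^1}$; a contraction-mapping argument in $C^0([0,\tau],H^1)$ then yields local existence and uniqueness on a time interval $\tau=\tau(\Vert\chi\Vert_{H^1})$, together with $\Psi\in C^1([0,\tau),L^2)$ by reading off $\partial_z\Psi$ from the equation.

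For globalization, the key point is the conservation of the charge $\Vert\Psi(z)\Vert_{L^2}=\Vert\chi\Vert_{L^2}$, proved by the same computation as in Proposition \ref{prop:discretegwp}: $\tfrac{d}{dz}\Vert\Psi\Vert_{L^2}^2=2\Re\langle\partial_z\Psi,\Psi\rangle=2\Re\langle\imath(\cD+\beta\sigma_3)\Psi,\Psi\rangle-2\Re\langle\imath G(\Psi)\Psi,\Psi\rangle=0$, the first term vanishing by self-adjointness and the second since $\langle\imath G(\Psi)\Psi,\Psi\rangle$ is purely imaginary ($G(\Psi)$ being Hermitian). Then, applying $\partial_x$ to the Duhamel formula and using unitarity of the group on $L^2$ together with the Leibniz rule, I would estimate
\[
\Vert\partial_x\Psi(z)\Vert_{L^2}\leq\Vert\partial_x\chi\Vert_{L^2}+C\int_0^z\Vert\Psi(s)\Vert_{L^\infty}^2\,\Vert\partial_x\Psi(s)\Vert_{L^2}\,ds\,.
\]
Here one uses $\Vert\Psi\Vert_{L^\infty}^2\lesssim\Vert\Psi\Vert_{L^2}\Vert\partial_x\Psi\Vert_{L^2}\lesssim\Vert\chi\Vert_{L^2}\Vert\partial_x\Psi\Vert_{L^2}$ by the conserved charge, giving a linear integral inequality for $\Vert\partial_x\Psi(z)\Vert_{L^2}$ with coefficient $C\Vert\chi\Vert_{L^2}$; Gronwall's lemma then yields $\Vert\Psi(z)\Vert_{H^1}\leq\Vert\chi\Vert_{H^1}e^{C\Vert\chi\Vert_{L^2}z}$, an $H^1$ bound that is finite on every bounded time interval. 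A standard continuation argument — if the maximal existence time $T^*$ were finite then $\Vert\Psi(z)\Vert_{H^1}$ would have to blow up as $z\to T^*$, contradicting the bound — upgrades the local solution to a global one in $C^0([0,\infty),H^1)\cap C^1([0,\infty),L^2)$.

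The main subtlety, as the remark after Theorem \ref{thm:main} already flags, is that the energy of the Dirac equation is sign-indefinite, so one cannot hope for an $H^1$ bound uniform in the size of the datum; however — and this is the point of the Gronwall estimate above, due to \cite{goodman,pely} — it is precisely the \emph{subcriticality in one space dimension} (the Sobolev embedding $H^{1/2}(\R)\hookrightarrow L^3(\R)$, or equivalently $H^1\hookrightarrow L^\infty$) that lets us trade the loss of one derivative in the nonlinearity against the conserved $L^2$-mass, so the $H^1$-norm grows at most exponentially rather than blowing up in finite time. I expect the only mildly delicate bookkeeping to be the Leibniz-rule estimate for $\partial_x(G(\Psi)\Psi)$ and checking the time-continuity of $\Psi$ with values in $H^1$ (as opposed to merely $L^\infty_z H^1_x$), which follows from the Duhamel formula and the strong continuity of the group.
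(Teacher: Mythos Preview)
Your local theory and the blow-up alternative are fine, but the globalization argument has a genuine gap. After the Gagliardo--Nirenberg step $\Vert\Psi\Vert_{L^\infty}^2\lesssim\Vert\Psi\Vert_{L^2}\Vert\partial_x\Psi\Vert_{L^2}=\Vert\chi\Vert_{L^2}\Vert\partial_x\Psi\Vert_{L^2}$, the integral inequality you obtain is
\[
\Vert\partial_x\Psi(z)\Vert_{L^2}\leq\Vert\partial_x\chi\Vert_{L^2}+C\Vert\chi\Vert_{L^2}\int_0^z\Vert\partial_x\Psi(s)\Vert_{L^2}^{2}\,ds,
\]
which is \emph{quadratic} in $\Vert\partial_x\Psi\Vert_{L^2}$, not linear. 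This is precisely a Bihari inequality with $w(u)=u^2$ (Lemma~\ref{lem:bihari}), and it only yields the bound $\Vert\partial_x\Psi(z)\Vert_{L^2}\leq(\Vert\partial_x\chi\Vert_{L^2}^{-1}-C\Vert\chi\Vert_{L^2}z)^{-1}$, which blows up at a finite time depending on the size of the datum. So the exponential bound you claim does not follow, and the continuation argument stalls. This circularity --- needing an $L^\infty$ bound to control $\partial_x\Psi$, while the only route to $L^\infty$ goes through $\partial_x\Psi$ --- is exactly the obstruction the remark after Theorem~\ref{thm:main} alludes to.

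The paper breaks this loop by a different mechanism (the one actually in \cite{goodman,pely}): pass to the characteristic variables $\varphi=\Psi^1+\Psi^2$, $\eta=\Psi^1-\Psi^2$, in which the principal part of $\cD$ becomes the pair of transport operators $\mp\partial_x$. Multiplying the resulting equations by $\overline{\varphi}(\vert\varphi\vert^2+\vert\eta\vert^2)^p$ and $\overline{\eta}(\vert\varphi\vert^2+\vert\eta\vert^2)^p$, adding, taking imaginary parts and integrating in $x$, the transport terms become exact $x$-derivatives and the cubic terms drop out, leaving only the mass coupling; this gives $\partial_z\Vert\Psi(z)\Vert_{L^{2p+2}}^{2p+2}\lesssim(p+1)\Vert\Psi(z)\Vert_{L^{2p+2}}^{2p+2}$ uniformly in $p$, hence by Gronwall and $p\to\infty$ an a priori $L^\infty$ bound $\Vert\Psi(z)\Vert_{L^\infty}\leq e^{Cz}\Vert\chi\Vert_{L^\infty}$ that involves \emph{no} derivatives of $\Psi$. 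With this $L^\infty$ bound in hand, your Duhamel estimate for $\Vert\partial_x\Psi\Vert_{L^2}$ becomes genuinely linear and Gronwall closes. The missing idea, then, is this independent $L^\infty$ control via the characteristic-variable/transport structure of the one-dimensional Dirac operator.
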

\begin{proof}
Consider the Duhamel integral formula
\be\label{eq:continuousDuhamel}
\Psi(z)=e^{-\imath z\cD}\chi-\imath\int^z_0e^{-\imath (z-s)\cD}[-\beta\sigma_3\Psi(s)+G(\Psi(s))]\Psi(s)\,ds\,.
\ee
The embedding $H^1(\R,\C^2)\hookrightarrow L^\infty(\R,\C^2)$ ensures that the map $\Psi\mapsto G(\Psi)\Psi$ is locally Lipschitz on $H^1(\R,\C^2)$, so that local well-posedness follows by standard contraction mapping arguments (see the proof of Prop. \eqref{prop:discretegwp}). Then there exists a maximal lifespan $0<T\leq+\infty$, $T=T(\Vert \chi\Vert_{H^1})$, and a unique solution $\Psi\in L^\infty([0,T],H^1(\R,\C^2))\cap W^{1,\infty}([0,T],L^2(\R,\C^2))$. By standard arguments (see, for instance, \cite[Lemma 2.2]{JDEgrafi}) regularity can be improved, so that $\Psi\in C^0([0,T],H^1(\R,\C^2))\cap C^{1}([0,T],L^2(\R,\C^2))$, and 
\be\label{eq:initialdatum}
\Psi(0)=\chi\in H^1(\R,\C^2)\,.
\ee
 
\smallskip

As for the proof of Proposition \ref{prop:discretegwp}, it suffices to show that the $H^1$-norm of the solution remains bounded as $t\to T^-$, that is 
\be\label{eq:H^1bound}
\sup_{t\in[0,T)}\Vert \Psi(t) \Vert_{H^1(\R,\C^2)}\leq M<+\infty\,.
\ee

Define
\[
\varphi:=\Psi^1+\Psi^2\,,\qquad \eta:=\Psi^1-\Psi^2\,,
\]
where $\Psi:=(\Psi^1,\Psi^2)^T:\R\times[0,T)\to\C^2$, so that substituting into \eqref{eq:cubicdirac} we find
\[
\left\{\begin{array}{rcl}
-\imath \partial_z\varphi&=&-\imath\partial_x\varphi+\beta\eta-\gamma[\frac{1}{4}(\vert \varphi\vert^2)+\vert\eta\vert^2)\varphi+\frac{1}{2}\Re(\varphi\overline{\eta})\eta] \\
-\imath \partial_z\eta&=&\imath\partial_x\eta+\beta\varphi-\gamma[\frac{1}{4}(\vert \varphi\vert^2)+\vert\eta\vert^2)\eta+\frac{1}{2}\Re(\varphi\overline{\eta})\varphi]
\end{array}\right. \,.
\]
Here $\Re(\cdot)$ denotes the real part of a complex number. Multiply the first line by $\overline{\varphi}(\vert\varphi\vert^2+\vert\eta\vert^2)^p$ and the second line by $\overline{\eta}(\vert\varphi\vert^2+\vert\eta\vert^2)^p$, $p\geq1$, adding the two equations and taking the imaginary part gives 
\[
\frac{1}{p+1}\partial_z(\vert\varphi\vert^2+\vert\eta\vert^2)^{p+1}\leq \frac{1}{p+1}\partial_x(\vert\varphi\vert^2-\vert\eta\vert^2)^{p+1}+\imath\beta(\eta\overline{\varphi}-\varphi\overline{\eta})(\vert\varphi\vert^2-\vert\eta\vert^2)^{p+1}\,.
\]
Then integrating in the $x$-variable and using that $2\vert \varphi\vert\vert\eta\vert\leq\vert\varphi\vert^2+\vert\eta\vert^2$ we find
\[
\partial_z \int_\R (\vert\varphi\vert^2+\vert\eta\vert^2)^{p+1}\,dx\leq 4(p+1)\beta\int_\R (\vert\varphi\vert^2+\vert\eta\vert^2)^{p+1}\,dx\,.
\]
Observing that by the following equivalence $\int_\R (\vert\varphi\vert^2+\vert\eta\vert^2)^{p+1}\,dx\simeq \Vert (\varphi,\eta)\Vert^{2p+2}_{L^{2p+2}_x}$, the Gronwall's lemma implies that 
\be\label{eq:lpbound}
\Vert (\varphi,\eta)(z)\Vert_{L^{2p+2}_x}\leq e^{Cz}\Vert (\varphi,\eta)(0)\Vert_{L^{2p+2}_x}\,,\qquad \forall z\in[0,T)\,,
\ee
for some constant $C>0$. Since \eqref{eq:lpbound} holds for all $p\geq1$, we also get the $L^\infty$-bound
\be\label{eq:uniformbound}
\Vert (\varphi,\eta)(z)\Vert_{L^{\infty}_x}\leq e^{Cz}\Vert (\varphi,\eta)(0)\Vert_{L^{\infty}_x}\,,\qquad \forall z\in[0,T)\,.
\ee
Analogous estimates thus hold for $\Psi$.
\smallskip

A simple computation shows that, given $\xi:\R\to\C^2$, there holds $\Vert \cD\xi\Vert^2_{L^2}=\Vert \partial_x\xi\Vert^2_{L^2}$. Recall that $e^{\imath z\cD}$ is an $L^2$-isometry and that it commutes with $\cD$, so that applying $\cD$ to both sides of \eqref{eq:continuousDuhamel} we get
\[
\Vert \partial_x\Psi (z)\Vert_{L^2_x}\leq \Vert \partial_x\chi\Vert_{L^2_x}+\int^z_0 \Vert \partial_x [G(\Psi)\Psi](s)\Vert_{L^2_x}+\Vert \beta'\Psi(s)\Vert_{L^2_x}+\Vert\beta\partial_x\Psi(s)\Vert_{L^2_x}\,ds\,,\quad \forall z\in[0,T)\,.
\]
Since $\beta\in C^1$ with bounded derivative, we get
\[
\Vert \partial_x\Psi (z)\Vert_{L^2_x}\leq \Vert \partial_x\chi\Vert_{L^2_x}+CT+\int^z_0 \Vert \partial_x [G(\Psi)\Psi](s)\Vert_{L^2_x}+C\Vert\partial_x\Psi(s)\Vert_{L^2_x}\,ds\,,\quad \forall z\in[0,T)\,,
\]
where we have used the conservation of the $L^2$-norm. 
Observe that the integral involving $G$ contains terms of the schematic form $\vert\Psi\vert^2\partial_x\Psi$, and then by \eqref{eq:uniformbound} there exists a constant $B(T)>0$ such that
\[
\Vert \partial_x\Psi (z)\Vert_{L^2_x}\leq \Vert \partial_x\chi\Vert_{L^2_x}+CT+ B(T)\int^z_0 \Vert \partial_x \Psi(s)\Vert_{L^2_x}\,ds\,,\quad \forall z\in[0,T)\,.
\]
Another application of the Gronwall's lemma, combined with \eqref{eq:lpbound}, finally gives 
\[
\Vert \Psi(z)\Vert_{H^1_x}\leq Ae^{Cz}\Vert \chi\Vert_{H^1_x}\,,\qquad \forall z\in[0,T)\,,
\]
for some finite constants $A,C>0$ depending on $T$. Then \eqref{eq:H^1bound} follows, as we assumed $0<T<\infty$.
\end{proof}

\section{Proof of the main results for constant mass}
\subsection{The discrete-to-continuum limit}
Recall the assumptions of Theorem \eqref{thm:main}. Let $\chi\in H^1(\R,\C^2)$ and consider its discretization $\chi_h:h\Z\to\C^2$, defined as in \eqref{eq:discretization}. By Lemma \ref{lem:interpolationconvergence} $p_h\chi_h\to \chi$ in $H^1$ as $h\to0^+$. Now denote by $\psi_h\in C^1([0,\infty),L^2_h)$ the global solution to \eqref{eq:discretenld} with initial datum $\chi_h$, provided by Proposition \ref{prop:discretegwp}.
\smallskip

Fix $0<T< (2C\Vert \chi_h\Vert^2_{H^1_h})^{-1}$ as in Theorem \ref{thm:main}. We first prove some bounds for $p_h\psi_h$ and $\partial_zp_h\psi_h$, uniform for $z\in[0,T]$. Notice that, since we are interested in the limit $h\to0^+$ we will assume $0<h<h_0$, for a fixed $h_0>0$.
\begin{lem}\label{lem:uniformbounds}
There holds
\be\label{eq:uniformbounds}
\sup_{z\in[0,T]}\Vert p_h\psi_h(z)\Vert_{H^1}\leq C\,,\qquad \sup_{z\in[0,T]}\Vert \partial_zp_h\psi_h(z)\Vert_{L^2}\leq C\,,
\ee
where the constant $C>0$ depends only on $h_0,T$ and 
\[
M:=\sup_{0< h\leq h_0}\Vert p_h\chi_h\Vert_{H^1}\leq C\,.
\]
\end{lem}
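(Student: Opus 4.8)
The first estimate in \eqref{eq:uniformbounds} is the a priori bound of Lemma \ref{lem:apriori} transported to the piecewise-linear interpolation. First I would note that $M$ is finite (and in fact $M\ge\Vert\chi\Vert_{H^1}$) since $p_h\chi_h\to\chi$ in $H^1$ by Lemma \ref{lem:interpolationconvergence}, while, by the first estimate of Lemma \ref{lem:estimates}, $\Vert\chi_h\Vert_{H^1_h}\le C\Vert\chi\Vert_{H^1}\le CM$ uniformly in $h\in(0,h_0]$. Hence the restriction $0<T<(2C\Vert\chi_h\Vert_{H^1_h}^2)^{-1}$ in Theorem \ref{thm:main} is, in fact, an $h$-uniform condition $0<T<(2C'M^2)^{-1}$, and Lemma \ref{lem:apriori} applies for every such $h$, giving $\sup_{z\in[0,T]}\Vert\psi_h(z)\Vert_{H^1_h}\le A(T,\Vert\chi_h\Vert_{H^1_h})\le A(T,CM)$. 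Applying the third estimate of Lemma \ref{lem:estimates}, $\Vert p_h\psi_h(z)\Vert_{H^1}\le C\Vert\psi_h(z)\Vert_{H^1_h}$, then closes the first bound, with a constant depending only on $h_0$, $T$ and $M$.

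For the second bound I would first observe that $p_h$ acts on the spatial variable only, so that, since $\psi_h\in C^1_z([0,\infty),L^2_h)$, one has $\partial_z(p_h\psi_h)=p_h(\partial_z\psi_h)$; moreover $\Vert p_hg_h\Vert_{L^2}\le C\Vert g_h\Vert_{L^2_h}$ for an arbitrary lattice function $g_h$ (this follows from the second estimate of Lemma \ref{lem:estimates} for the piecewise-constant interpolation, together with the elementary estimate $h^2\Vert\partial_hg_h\Vert_{L^2_h}^2\le 4\Vert g_h\Vert_{L^2_h}^2$ controlling the linear correction $p_hg_h-q_hg_h$). Hence $\Vert\partial_z p_h\psi_h(z)\Vert_{L^2}\le C\Vert\partial_z\psi_h(z)\Vert_{L^2_h}$. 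Rewriting \eqref{eq:discretenld2} as $\partial_z\psi_h=-\imath(\D_h+\beta\sigma_3)\psi_h+\imath G(\psi_h)\psi_h$, I would then estimate the three contributions in $L^2_h$: first $\Vert\D_h\psi_h\Vert_{L^2_h}\le C\Vert\psi_h\Vert_{H^1_h}$ with $C$ uniform in $h$, which is immediate from the Fourier representation of $\partial_h,\partial_h^*$, since $|e^{\imath\xi}-1|^2=4\sin^2(\xi/2)\le|\xi|^2$ on $[-\pi,\pi]$ and the weight $1+h^{-2}|\xi|^2$ defining the $H^1_h$-norm absorbs exactly the $h^{-1}$ of the difference quotient; second $\Vert\beta\sigma_3\psi_h\Vert_{L^2_h}\le\beta\Vert\psi_h\Vert_{L^2_h}$ (with $\beta$ replaced by $\Vert\beta_h\Vert_{L^\infty_h}\le\Vert\beta\Vert_{L^\infty}$ in the domain-wall case); and third $\Vert G(\psi_h)\psi_h\Vert_{L^2_h}\le\Vert\psi_h\Vert_{L^\infty_h}^2\Vert\psi_h\Vert_{L^2_h}\le C\Vert\psi_h\Vert_{H^1_h}^3$ by the discrete uniform Sobolev inequality. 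Altogether $\Vert\partial_z\psi_h(z)\Vert_{L^2_h}\le C\bigl(\Vert\psi_h(z)\Vert_{H^1_h}+\Vert\psi_h(z)\Vert_{H^1_h}^3\bigr)$, and the first bound already proved makes the right-hand side uniformly bounded for $z\in[0,T]$, which yields the second inequality in \eqref{eq:uniformbounds}.

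The argument is therefore short modulo Lemma \ref{lem:apriori}, and the only genuine points requiring care are the uniformity in $h$ --- namely checking that $\Vert\chi_h\Vert_{H^1_h}$ stays bounded below the blow-up threshold of Lemma \ref{lem:apriori} as $h\to0^+$, so that the common time interval $[0,T]$ does not shrink --- and the $h$-independence of the operator estimate $\Vert\D_h\psi_h\Vert_{L^2_h}\le C\Vert\psi_h\Vert_{H^1_h}$, which is precisely where the scaling built into the $H^1_h$-norm is used. I expect the first of these to be the main, if mild, obstacle, since it is the step where the restriction on $T$ in the statement interacts with the discretization parameter.
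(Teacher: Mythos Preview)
Your proposal is correct and follows essentially the same route as the paper: transport the a priori $H^1_h$-bound of Lemma \ref{lem:apriori} through the interpolation estimate of Lemma \ref{lem:estimates} for the first inequality, and for the second use the equation to bound $\Vert\partial_z\psi_h\Vert_{L^2_h}$ termwise and then commute $\partial_z$ with $p_h$. If anything, you are slightly more careful than the paper in two places --- you explicitly justify $\Vert p_h g_h\Vert_{L^2}\le C\Vert g_h\Vert_{L^2_h}$ (which is not literally among the estimates of Lemma \ref{lem:estimates}) and you keep track of the mass term --- but these are refinements of the same argument, not a different approach.
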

\begin{proof}
Consider $M>0$ as above, and observe that $M<+\infty$ as $p_h\chi_h\to \chi$ in $H^1$ as $h\to0^+$. Recall that by Lemma \ref{lem:estimates} we have
\[
\Vert p_h\psi_h(z)\Vert_{H^1}\leq C\Vert \psi_h(z)\Vert_{H^1_h}\,,
\]
and the a-priori estimate in Lemma \ref{lem:apriori} gives
\[
\sup_{z\in[0,T]}\Vert\psi_h(z)\Vert_{H^1_h}\leq C(T,\Vert \chi_h\Vert_{H^1_h})\,.
\]
Combining those facts with $\Vert\chi_h\Vert_{H^1_h}\leq C\Vert \chi\Vert_{H^1}$ from Lemma \ref{lem:estimates} we get the first item in \eqref{eq:uniformbounds}.

By \eqref{eq:discretenld} we find
\[
\Vert\partial_z\psi_h(z)\Vert_{L^2_h}\leq \Vert \D_h\psi_h(z)\Vert_{L^2_h}+\Vert G(\psi_h(z))\psi_h(z)\Vert_{L^2_h}\,.
\]
There holds, using the definition of $\D_h$
\[
\Vert \D_h\psi_h(z)\Vert_{L^2_h}\leq C\sup_{z\in[0,T]}\Vert \psi_h(z)\Vert_{H^1_h}\leq C(T,\Vert\chi_h\Vert_{H^1_h})
\]
by the previous bounds. Moreover, we have
\[
\Vert G(\psi_h(z))\psi_h(z)\Vert_{L^2_h}\leq \Vert G(\psi_h(z))\psi_h(z)\Vert_{H^1_h}\,,
\]
and by the Leibniz rule in $H^1_h$ and the embedding $H^1_h\hookrightarrow L^\infty_h$ we then get
\[
\Vert G(\psi_h(z))\psi_h(z)\Vert_{L^2_h}\leq C(T,\Vert\chi_h\Vert_{H^1_h})\,,
\]
as $G(\varphi)\varphi\sim \vert\varphi\vert^2\varphi$ is a cubic nonlinearity. We have thus proved that 
\[
\sup_{z\in[0,T]}\Vert \partial_z\psi_h(z)\Vert_{L^2_h}\leq C\,,
\]
for some constant $C>0$, depending on $h_0,T, M$. The second inequality in \eqref{eq:uniformbounds} follows observing that $\partial_z$ and $p_h$ commute.
\end{proof}
We are now in a position to prove the first main result of the paper.
\begin{proof}[Proof of Theorem \ref{thm:main}]
By Lemma \ref{lem:uniformbounds} and the Banach-Alaoglu theorem there exists a sequence $h_n\to0^+$, as $n\to\infty$, such that
\be\label{eq:psiconvergence}
p_{h_n}\psi_{h_n}\rightharpoonup\Psi \quad \mbox{weakly-$*$ in $L^\infty([0,T],H^1(\R,\C^2))$}
\ee
and
\be\label{eq:derivativepsiconvergence}
\partial_zp_{h_n}\psi_{h_n}\rightharpoonup\Psi \quad \mbox{weakly-$*$ in $L^\infty([0,T],L^2(\R,\C^2))$}\,,
\ee
as $n\to\infty$. We now prove that $\Psi$ solves \eqref{eq:cubicdirac}. To this aim, it suffices to prove that the equation is verified for a.e. $z\in[0,T]$. By \eqref{eq:derivativepsiconvergence} we have
\be\label{eq:derivativetest}
\lim_{n\to\infty}\int^T_0\langle\Phi,\imath\partial_zp_{h_n}\psi_{h_n}(z)\rangle_{L^2}\,dz=\int^T_0\langle\Phi,\imath\partial_z\Psi(z)\rangle_{L^2}\,dz\,,
\ee
for every $\Phi\in L^1([0,T], H^1(\R,\C^2))$. 

We now need to prove that
\be\label{eq:test}
\lim_{n\to\infty}\int^T_0\langle\Phi,p_{h_n}\D_{h_n}\psi_{h_n}(z)\rangle_{L^2}\,dz=\int^T_0\langle\Phi,\cD\Psi(z)\rangle_{L^2}\,dz\,,
\ee
for every $\Phi\in L^1([0,T], H^1(\R,\C^2))$. We prove that claim, by density, for functions $\Phi(t,x)=f(t)u(x)$ with $f\in C^\infty_c([0,T])$ and $u\in C^\infty_c(\R,\C^2)$. Let $u_h$ be the discretization of $u$, as in \eqref{eq:discretization}. By Lemma \ref{lem:interpolationconvergence} $\Vert p_hu_h-u\Vert_{H^1}\to0$ as $h\to0^+$, and there holds $\Vert p_h\D_h\psi_h(z)\Vert_{L^2}\leq C$, so that 
\[
\langle p_{h_n}u_{h_n}-u,p_{h_n}\D_{h_n}\psi_{h_n}(z)\rangle_{L^2}\leq C\Vert p_{h_n}u_{h_n}-u\Vert_{H^1}\to0\,,
\]
as $n\to+\infty$, for every $t\in[0,T]$. Then it suffices to prove that
\be\label{eq:interpolatedtest}
\lim_{n\to\infty}\int^T_0\langle fp_{h_n}u_{h_n},p_{h_n}\D_{h_n}\psi_{h_n}(z)\rangle_{L^2}\,dz=
\int^T_0\langle fu,\cD\Psi(z)\rangle_{L^2}\,dz\,.
\ee
A straightforward computation using the definition of $\D_h$ in \eqref{eq:discretedirac} gives
\[
\langle fp_{h_n}u_{h_n},p_{h_n}\D_{h_n}\psi_{h_n}(z)\rangle_{L^2}=\langle fp_{h_n}\D_{h_n}u_{h_n},p_{h_n}\psi_{h_n}(z)\rangle_{L^2}
\]
for every $z\in[0,T]$. We claim that 
\be\label{eq:interpolatedconv}
p_{h_n}\D_{h_n}u_{h_n}\to\cD u\,,
\ee
in $L^2(\R,\C^2)$ as $n\to\infty$.

A direct computation shows that 
\[
(\D_{h_n}u_{h_n})(x_n)=(\D_{h_n}u)_{h_n}(x_n)
\]
 that is, the discretization \eqref{eq:discretization} and the action of $\D_h$ commute. Moreover, by Lemma \ref{lem:convergenceoperator} there holds
 \be\label{eq:Dconv}
 \Vert \D_{h_n}u-\cD u\Vert_{L^2}\to0\,,\qquad\mbox{as $h\to0^+$}
 \ee
and then we find
\[
\begin{split}
\Vert p_{h_n}\D_{h_n}u_{h_n}-\cD u\Vert_{L^2}&=\Vert p_{h_n}(\D_{h_n}u)_{h_n}-\cD u\Vert_{L^2} \\
&\leq \Vert p_{h_n}(\D_{h_n}u-\cD u)_{h_n}\Vert_{L^2} +\Vert p_{h_n}(\cD u)_{h_n}-\cD u\Vert_{L^2}\,.
\end{split}
\]
Since $\Vert p_{h_n}(\D_{h_n}u-\cD u)_{h_n}\Vert_{L^2}\leq\Vert \D_{h_n}u-\cD u\Vert_{L^2}$, \eqref{eq:interpolatedconv} follows combining \eqref{eq:Dconv} and Lemma \ref{lem:interpolationconvergence}. By \eqref{eq:interpolatedconv}, the dominated convergence theorem gives \eqref{eq:interpolatedtest}.

We now deal with the nonlinear term of the equation. We need to prove that
\be\label{eq:nonlinearconv}
\lim_{n\to\infty}\int^T_0\langle\Phi,p_{h_n}(G(\psi_{h_n}(z))\psi_{h_n})\rangle_{L^2}\,dz=\int^T_0\langle\Phi, G(\Psi(z))\Psi(z)\rangle_{L^2} \,dz
\ee
for every $\Phi\in L^1([0,T],H^1(\R,\C^2))$. We consider again functions $\Phi(t,x)=f(t)u(x)$ with $f\in C^\infty_c([0,T])$ and $u\in C^\infty_c(\R,\C^2)$.

Observe that
\[
\Vert p_{h_n}(G(\psi_{h_n}(z))\psi_{h_n}) \Vert_{L^2}\leq C\Vert\vert\psi_{h_n}(z)\vert^2\psi_{h_n}(z) \Vert_{L^2_h}\leq C\Vert\psi_{h_n}(z)\Vert^2_{L^\infty_{h_n}}\Vert\psi_{h_n}(z)\Vert_{L^2_{h_n} }\leq C\,,
\]
where we have used Lemma \ref{lem:apriori}. Then we can assume that $p_{h_n}(G(\psi_{h_n}(z))\psi_{h_n})$ weakly converges in $L^2(\R,\C^2)$, for a.e. $z\in[0,T]$. By \cite{Lady} this is equivalent to the weak convergence of the piecewise constant interpolation $q_{h_n}(G(\psi_{h_n}(z))\psi_{h_n})$, defined in \eqref{eq:piecewiseconstant}.

Then we are lead to show that
\be\label{eq:constantconv}
\lim_{n\to\infty}\langle u,q_{h_n}(G(\psi_{h_n}(z))\psi_{h_n})\rangle_{L^2}=\langle u, G(\Psi(z))\Psi(z)\rangle_{L^2}\,,
\ee
for a.e. $z\in[0,T]$. By \eqref{eq:psiconvergence}, $p_{h_n}\psi_{h_n}(z)\to\Psi(z)$ strongly in $L^2_{loc}(\R,\C^2)$ for a.e. $z\in[0,T]$. As a general fact, from \cite{Lady} we know that this is equivalent to the same result for the piecewise constant interpolation, namely, $q_{h_n}\psi_{h_n}(z)\to\Psi(z)$ strongly in $L^2_{loc}(\R,\C^2)$ for a.e. $z\in[0,T]$. Then notice that 
\[
q_{h_n}(G(\psi_{h_n})\psi_{h_n})=G(q_{h_n}\psi_{h_n})q_{h_n}\psi_{h_n}\,,
\]
and then since $\Vert\psi_{h_n}(z)\Vert_{L^\infty_h}\leq C$, the claim \eqref{eq:constantconv} follows by the dominated convergence theorem. This result, in turn, implies \eqref{eq:nonlinearconv}. A similar argument allows to deal with the mass term $\beta\sigma_3\Psi$ in \eqref{eq:cubicdirac}.
\smallskip

The above discussion shows that $\Psi\in L^\infty([0,T],H^1(\R,\C^2))\cap W^{1,\infty}([0,T],L^2(\R,\C^2))$ in \eqref{eq:psiconvergence},\eqref{eq:derivativepsiconvergence} satisfies
\[
\int^T_0\langle\Phi,\imath\partial_z\Psi\rangle_{L^2}\,dz=\int^T_0\langle\Psi,(\cD+\beta\sigma_3)\Psi\rangle_{L^2}\,dz-\gamma\int^T_0\langle\Phi,G(\Psi)\Psi\rangle_{L^2}\,dz\,,
\]
for every $\Phi\in L^1([0,T],H^1(\R,\C^2))$. Moreover, equation \eqref{eq:cubicdirac} is solved for a.e. $z\in[0,T]$ and by uniqueness $\Psi$ coincides with the solution found in Proposition \ref{eq:gwp}, so that the limit in \eqref{eq:psiconvergence},\eqref{eq:derivativepsiconvergence} does not depend on the sequence $h_n\to0$.
 \end{proof}
 \subsection{Existence of standing waves}
 \begin{proof}[Proof of Theorem \ref{thm:main2}]
We look for a solution to \eqref{eq:stationary} of the form 
\[
\Xi(x)=(u(x),\imath v(x))\,,\qquad x\in \R\,,
\]
with $u,v$ real-valued. Then we get the following Hamiltonian system
\be\label{eq:hamiltoniansystem}
\left\{\begin{array}{rcl}
 u'&=&-\omega v-\beta v- v^3=-\partial_v H \\
 v'&=&\omega u-\beta u +u^3=\partial_u H
\end{array}\right.
\ee
with Hamiltonian function
\be\label{eq:hamiltonian}
H(u,v)=\frac{u^4+v^4}{4}+\frac{\beta}{2}(v^2-u^2)+\frac{\omega}{2}(u^2+v^2)\,.
\ee
Here $f'=\partial_x f$.
 \begin{figure}[h]
\centering
{\includegraphics[width=.5\columnwidth]{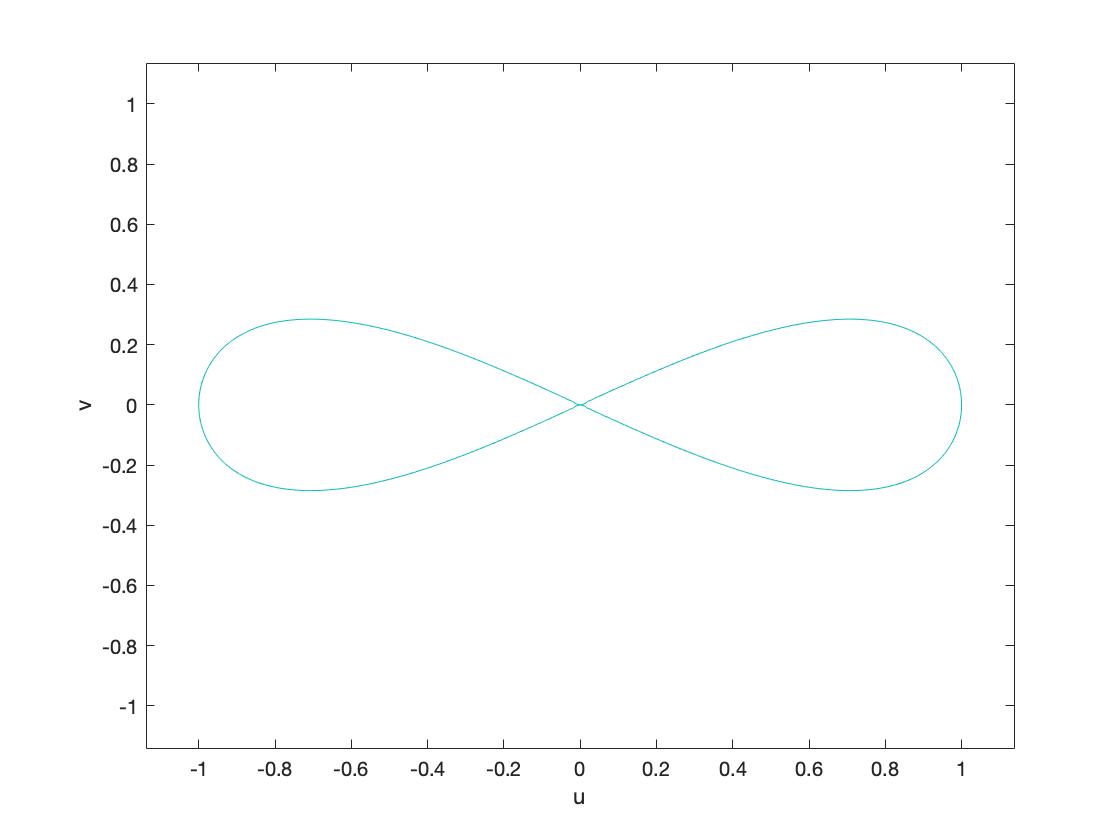}}
\caption{The level curve $\{H=0\}$ for $\beta=1,\omega=0.5$.}
\label{fig:levelset}
\end{figure}
Notice that here the variable $x$ plays the role of time. Notice that local existence for \eqref{eq:hamiltoniansystem} holds, as the system is smooth. Since we consider solutions that tend to zero as $\vert x\vert\to\infty$, we search for orbits of \eqref{eq:hamiltoniansystem} on the zero-energy curve 
\[
\{(u,v)\in\R^2\,:\,H(v,u)=0\}\,,
 \]
see Figure \ref{fig:levelset}. Such set is clearly compact, and then orbits on it exist for all times.
 
 A direct computation imposing $\partial_u H =\partial_v H=0$ give that the origin $(0,0)$ is the only equilibrium of \eqref{eq:hamiltoniansystem}. We are thus looking for a homoclinic orbit $(u,v)$
 \be\label{eq:homoclinic}
 \lim_{\vert x\vert\to\infty}(u(x),v(x))=(0,0)\,.
 \ee
 To this aim, fix as initial point on $\{H=0\}$ 
 \be\label{eq:initialdatum}
 (u(0),v(0))=(\sqrt{2(\beta-\omega)},0)\,.
 \ee The corresponding solution to \eqref{eq:hamiltoniansystem} is global, and we prove \eqref{eq:homoclinic} using the Poincar\`e-Bendixson theorem \cite[Theorem 7.16]{Teschl}.
 
 Observe that the solution with \eqref{eq:initialdatum} is confined in the region $\{u^2>v^2,u\geq 0 \}$ for all times. Indeed, it cannot cross the set $\{u^2=v^2\}$, as if $u^2=v^2$ the condition $H(u,v)=0$ gives
 \[
 u^4+2\omega u^2=0 \implies u=v=0\,,
 \]
 by \eqref{eq:hamiltonian}, and this is absurd as the origin is an equilibrium of \eqref{eq:hamiltoniansystem}. Thus
 \be\label{eq:inequality}
 u^2(x)>v^2(x)\,,\qquad\forall x\in\R\,.
 \ee
 
 Let
$$
\Omega_\pm = \{ (x,y)\in\R^2:\ \text{for some}\ x_n\to\pm\infty \,, (u(x_n),v(x_n))\to (x,y)\} \,.
$$
Since the set $\{ H=0\}$ is compact, it is not hard to see that the limit sets $\Omega_\pm$ are non-empty, compact and connected \cite[Lemma 6.6]{Teschl}. By the Poincar\'e--Bendixson theorem, one of the following three alternatives holds: (a) $\Omega_\pm$ is an equilibrium, (b) $\Omega_\pm$ is a regular periodic orbit (limit cycle), (c) $\Omega_\pm$ consists of fixed points and non-closed orbits connecting these fixed points. So we need to rule out alternative (b) and (c). Observe that in both cases the limit sets must be included in $\{H=0\}$.

Let us define the angle
\[
\theta(x):=\arctan{\frac{v(x)}{u(x)}}\,,\qquad x\in\R\,.
\]
A simple computation gives
\be\label{eq:monotone}
\theta'(x)=\frac{1}{u^2+v^2}(v'u-u'v)=\frac{u^4+v^4}{2(u^2+v^2)}\,,
\ee
using the condition $H=0$. Then $\theta'(x)>0$ if $(u,v)\neq(0,0)$, and this rules out alternative (b), as 
\[
\theta^-:=\lim_{x\to-\infty}\theta(x)<\lim_{x\to+\infty}\theta(x)=:\theta^+\,,
\]
while in presence of a limit cycle such limits do not exist. 

Suppose (c) holds. Then 
\[
\Omega_\pm\subset \{(r\cos\theta^\pm,r \sin\theta^\pm)\in\R^2\,,r \geq 0\}\,,\qquad   r^2=u^2+v^2\,,
 \]
but this is again excluded by \eqref{eq:monotone}, and \eqref{eq:homoclinic} is proved.
\smallskip
 
 We now need to prove the exponential decay estimate \eqref{eq:decay}. 

 Differentiating the first equation in \eqref{eq:hamiltoniansystem} we find
 \[
 u''=(\beta^2-\omega^2)u+F(u,v)u\,,
 \]
 with $\lim_{x\to\infty}F(u(x),v(x))=0$. Then, fixed $0<\eps<\beta^2-\omega^2$, there exists $M_\eps>0$ such that
 \[
 u''\geq (\beta^2-\omega^2-\eps)u\,.
 \]
 The function 
 \[
 U(x):=u(x)-u(M_\eps)e^{-\sqrt{\beta^2-\omega^2-\eps}(x-M_\eps)}\,,\quad x\geq M_\eps
 \]
 verifies $U(M_\eps)=0$, $\lim_{x\to+\infty}U(x)=0$ and 
 \[
 U''\geq (\beta^2-\omega^2-\eps) U\,,
 \]
 so that by the comparison principles 
 \[
 U(x)\leq 0\,,\qquad \forall x\geq M_\eps\,.
 \]
 Then 
 \[
 u(x)\leq u(M_\eps) e^{-\sqrt{\beta^2-\omega^2-\eps}(x-M_\eps)}\,,\qquad \forall x\geq M_\eps\,.
 \]
 A similar argument holds for $x\leq- M_\eps$, and then by continuity there exists a constant $C_\eps>0$ such that
 \[
 u(x)\leq C_\eps e^{-\sqrt{\beta^2-\omega^2-\eps}\vert x\vert}\,,\qquad\forall x\in\R\,.
 \]
 We can rewrite the equation for $u$ as
 \[
 -u''+(\beta^2-\omega^2)u=G\,,
 \]
 with $G(x):=-F(u(x),v(x))u(x)$, and applying the Green's function 
 \[
 u(x)=\frac{1}{2\sqrt{\beta^2-\omega^2}}\int_\R G(y)e^{-\vert x- y\vert\sqrt{\beta^2-\omega^2}}\,dy
 \]
 we get the decay estimate \eqref{eq:decay} for $u$. The same holds for $v$ by \eqref{eq:inequality} and the proof is concluded.
 \end{proof}
  \section{Proof of the main results for a domain-wall mass}
  \subsection{The discrete-to-continuum limit}
In this case the mass $\beta$ is no longer a constant, but it is assumed to be a domain-wall function (see Def. \ref{def:domainwall}). The proof is the same as for Theorem \ref{thm:main}.

\begin{proof}[Proof of Theorem \ref{thm:main3}]
Let $\beta_h$ be the discretization of the mass function $\beta$.
We only need to prove the convergence of the mass term, namely
\be\label{eq:massconv}
\lim_{n\to+\infty}\int^T_0\langle\Phi,p_{h_n}\beta_{h_n}\sigma_3\psi_{h_n}\rangle_{L^2}\,dz=\int^T_0\langle\Phi,\beta\sigma_3\Psi\rangle_{L^2}\,dz\,,
\ee
for every $\Phi\in L^1([0,T], H^1(\R,\C^2))$. As before, we prove that claim for functions $\Phi(t,x)=f(t)u(x)$ with $f\in C^\infty_c([0,T])$ and $u\in C^\infty_c(\R,\C^2)$.
\smallskip

There holds $\lim_{n\to+\infty}\Vert p_{h_n}\beta_{h_n}-\beta\Vert_{L^2}=0$. Note that $\beta$ is \emph{not} in $L^2(\R)$ as it is asymptotic to non-zero constants to $\pm\infty$. However, the difference $p_{h_n}\beta_{h_n}-\beta$ does belong to $L^2(\R)$ and the result applies. Furthermore, by \eqref{eq:psiconvergence} $p_{h_n}\psi_{h_n}\to\Psi$ in $L^2_{loc}$ as $n\to\infty$.

Then we have
\[
p_{h_n}\beta_{h_n}\sigma_3\psi_{h_n}-\beta\sigma_3\Psi=(p_{h_n}\beta_{h_n}-\beta)p_{h_n}\sigma_3\psi_{h_n}+\beta(p_{h_n}\sigma_3\psi_{h_n}-\sigma_3\Psi)
\]
so that
\[
\lim_{n\to\infty}\langle u, p_{h_n}\beta_{h_n}\sigma_3\psi_{h_n}\rangle_{L^2}=\langle u, \beta\sigma_3\Psi\rangle_{L^2}\,,
\]
 and \eqref{eq:massconv} follows.
\end{proof}
  \subsection{Existence of standing waves}
  We now turn to the proof of Theorem \ref{thm:main4}. Notice that in this case the mass $\beta$ is no longer a constant but it is a domain-wall function, as in Definition \ref{def:domainwall}. For this reason we look for solutions to \eqref{eq:stationary} using variational methods. To this aim, some preliminaries are in order. We only sketch part of the arguments as they rely on well-established techniques, referring to some references for more details.
  \smallskip
  
  Weak solutions to \eqref{eq:stationary} correspond to critical points of the $C^2$ functional
  \be\label{eq:action}
  \cL(\Phi)=\frac{1}{2}\int_\R \langle(\cD+\beta\sigma_3-\omega)\Phi,\Phi\rangle\,dx-\frac{1}{4}\int_\R\vert\Phi^1 \vert^4+\vert\Phi^2\vert^4 \,dx\,,
  \ee
  defined for $\Phi\in H^{1/2}(\R,\C^2)$, where $\Phi=(\Phi^1,\Phi^2)^T$. In order to exploit the geometry of the functional, it is more convenient to rewrite it as follows.
  
  Since $\omega\notin\operatorname{Spec}(\cD+\beta\sigma_3)$, we can decompose the Hilbert space $X=H^{1/2}(\R,\C^2)$ as 
  \be\label{eq:splitX}
  X=X^+\oplus X^-\,,
  \ee
  where $X^\pm=\operatorname{P^\pm}X$, and $\operatorname{P^\pm}$ is the spectral projector onto the positive/negative subspace of $(\cD+\beta\sigma_3-\omega)$. Accordingly, we get 
  \be\label{eq:action2}
   \cL(\Phi)=\frac{1}{2}\left(\Vert\Phi^+\Vert^2_\omega-\Vert\Phi^-\Vert^2_\omega\right)-\frac{1}{4}\int_\R\vert\Phi^1 \vert^4+\vert\Phi^2\vert^4 \,dx\,.
  \ee
  Here $\Phi^\pm:=\operatorname{P^\pm}\Phi$ and the norm $\Vert\cdot\Vert_\omega$ is defined by
  \[
  \Vert \psi\Vert_\omega:=\Vert \sqrt{\vert \cD+\beta\sigma_3-\omega\vert}\psi\Vert_{L^2}\,.
  \]
  As it can be readily seen from \eqref{eq:action2}, the functional is strongly indefinite so that in order to prove the existence of a critical point we need to show it has a \emph{linking geometry}, see e.g. \cite{BCT-SIMA,es} or \cite[Sec. II.8]{struwevariational}. 
  \begin{lem}\label{lem:negative}
  There exists $R>0$ such that if $\Phi^+\in X^+$ and $\Vert\Phi^+\Vert_\omega=1$
  \be\label{eq:negative}
  \cL(\Phi)\leq0\,,\qquad\forall\Phi\in\partial\cM_R\,,
  \ee
  where
  \[
  \partial\cM_R=\{\Phi\in\cM_R\,:\,\Vert\Psi\Vert_\omega=R\quad \mbox{or}\quad s=R\}
  \]
  and
  \be\label{eq:M}
  \cM_R:=\{\Phi=\Psi+s\Phi^+\in X^-\oplus\R_+\Phi^+\,: \Vert\Psi\Vert_\omega\leq R\,, 0\leq s\leq R \}
  \ee
  \end{lem}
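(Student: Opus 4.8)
The plan is to estimate $\cL(\Phi)$ for $\Phi = \Psi + s\Phi^+$ with $\Psi \in X^-$, $s \geq 0$, $\|\Phi^+\|_\omega = 1$, and show that $\cL$ becomes negative once either $\|\Psi\|_\omega$ or $s$ is large enough. Since $\Phi^+ \in X^+$ and $\Psi \in X^-$ are $\Vert\cdot\Vert_\omega$-orthogonal (the spectral projectors $\mathrm P^\pm$ are orthogonal for this equivalent inner product), the quadratic part of \eqref{eq:action2} reads
\[
\frac12\left(\Vert s\Phi^+\Vert_\omega^2 - \Vert\Psi\Vert_\omega^2\right) = \frac{s^2}{2} - \frac12\Vert\Psi\Vert_\omega^2 .
\]
The quartic term $\frac14\int_\R |\Phi^1|^4 + |\Phi^2|^4\,dx$ is nonnegative, so immediately
\[
\cL(\Phi) \leq \frac{s^2}{2} - \frac12\Vert\Psi\Vert_\omega^2 .
\]
This already shows $\cL(\Phi) \leq 0$ on the face $\{\Vert\Psi\Vert_\omega = R,\ 0 \leq s \leq R\}$ \emph{provided} $R$ is chosen so that $R^2/2$ is dominated — but of course $s^2/2$ can be as large as $R^2/2$ there, so the linear estimate alone is not enough on that face either. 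I therefore need a genuine lower bound on the quartic term in terms of $s$.

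The key step is the following: on the finite-dimensional-in-the-relevant-direction cone, $\|\Phi^+\|_{L^4}$ is controlled below along the ray $s\Phi^+$. More precisely, fix $\Phi^+$ with $\|\Phi^+\|_\omega = 1$; since $X^+ \hookrightarrow L^4(\R,\C^2)$ continuously and $\Phi^+ \neq 0$, there is $c_0 = c_0(\Phi^+) > 0$ with $\int_\R |(\Phi^+)^1|^4 + |(\Phi^+)^2|^4\,dx \geq c_0$. Writing $\Phi = \Psi + s\Phi^+$ and using that the $L^4$ norm of the sum dominates (up to a multiplicative constant depending only on the power $4$) a positive multiple of $s^4\|\Phi^+\|_{L^4}^4$ minus lower-order cross terms — here one uses the elementary inequality $|a+b|^4 \geq \tfrac12|b|^4 - C|a|^4$ pointwise — one gets, after integrating and using $\|\Psi\|_{L^4} \leq C\|\Psi\|_\omega$,
\[
\frac14\int_\R |\Phi^1|^4 + |\Phi^2|^4\,dx \;\geq\; \kappa s^4 - C'\Vert\Psi\Vert_\omega^4
\]
for constants $\kappa = \kappa(\Phi^+) > 0$ and $C' > 0$. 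Combining with the quadratic estimate,
\[
\cL(\Phi) \;\leq\; \frac{s^2}{2} - \frac12\Vert\Psi\Vert_\omega^2 - \kappa s^4 + C'\Vert\Psi\Vert_\omega^4 .
\]
This is \emph{not} automatically nonpositive because of the $+C'\|\Psi\|_\omega^4$ term, so this naive splitting must be refined: one should instead only discard \emph{part} of the quartic, keeping enough to absorb $\|\Psi\|_\omega^4$. The cleaner route is to not split at all but argue by homogeneity/compactness — see below.

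The cleanest argument, and the one I would actually carry out, is a contradiction/scaling argument. Suppose no such $R$ exists; then there are $\Phi^+_k$ with $\|\Phi^+_k\|_\omega = 1$ and $\Phi_k = \Psi_k + s_k\Phi^+_k \in \partial\cM_{R_k}$ with $R_k \to \infty$ and $\cL(\Phi_k) > 0$. From $\cL(\Phi_k) > 0$ and the quadratic bound, $s_k^2 > \Vert\Psi_k\Vert_\omega^2 + \tfrac12\int|\Phi_k^1|^4 + |\Phi_k^2|^4$, so in particular $s_k \geq \Vert\Psi_k\Vert_\omega$ and $s_k$ is comparable to $R_k$ (since on $\partial\cM_{R_k}$ either $s_k = R_k$ or $\Vert\Psi_k\Vert_\omega = R_k \leq s_k$), hence $s_k \to \infty$. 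Normalize by $s_k$: set $\widetilde\Psi_k = \Psi_k/s_k$, so $\|\widetilde\Psi_k\|_\omega \leq 1$, and $0 < \cL(\Phi_k)/s_k^2 = \tfrac12(1 - \|\widetilde\Psi_k\|_\omega^2) - \tfrac{s_k^2}{4}\int |(\widetilde\Psi_k + \Phi^+_k)^1|^4 + |(\widetilde\Psi_k + \Phi^+_k)^2|^4$. Since the quadratic part is bounded by $\tfrac12$, we must have $\int |(\widetilde\Psi_k + \Phi^+_k)^1|^4 + |(\widetilde\Psi_k + \Phi^+_k)^2|^4 \to 0$; but $\widetilde\Psi_k + \Phi^+_k$ has $X^+$-component $\Phi^+_k$ of unit $\omega$-norm, and the map $\Phi \mapsto \int |\Phi^1|^4 + |\Phi^2|^4$ is bounded below on $\{\Phi : \|\mathrm P^+\Phi\|_\omega = 1,\ \|\mathrm P^-\Phi\|_\omega \leq 1\}$ — here one needs that this bound is \emph{uniform} over $\Phi^+_k$, which is exactly the point requiring care. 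Uniformity follows because $X^+ \ni \Phi^+ \mapsto \|\Phi^+\|_{L^4}$ and the full $L^4$ functional are continuous and the relevant set, while not norm-compact, has the property that $\int|\Phi^1|^4+|\Phi^2|^4 = 0$ forces $\Phi \equiv 0$, contradicting $\|\mathrm P^+\Phi\|_\omega = 1$; making this quantitative is the main obstacle. I expect to handle it by invoking that, for the specific one-dimensional Dirac operator with domain-wall mass, $\|\Phi\|_{L^4}$ is equivalent to a fixed fractional-Sobolev-type norm on the relevant subspace, together with the fact that $\int|\Phi^1|^4 + |\Phi^2|^4 \geq c\|\Phi\|_{L^4}^4 \geq c\,\|\mathrm P^+\Phi\|_{L^4}^4 - C\|\mathrm P^-\Phi\|_{L^4}^4$ cannot all go to zero while $\|\mathrm P^+\Phi_k\|_\omega$ stays at $1$ and $\|\mathrm P^-\Phi_k\|_\omega$ stays bounded, because then $\mathrm P^+\Phi_k \to 0$ in $L^4$ which, combined with $\|\mathrm P^+\Phi_k\|_\omega = 1$, contradicts the Gagliardo–Nirenberg-type control $\|\mathrm P^+\Phi_k\|_\omega \lesssim \|\mathrm P^+\Phi_k\|_{L^4}^{\theta}\|\mathrm P^+\Phi_k\|_{L^2}^{1-\theta}$ valid on $X^+$. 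With this the contradiction closes and the lemma follows; the constant $R$ depends only on $\omega$ and $\beta$ through the constants in these embeddings, not on the particular $\Phi^+$.
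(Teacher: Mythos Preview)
Your argument is unnecessarily complicated and contains a genuine error at the end. First, a confusion near the beginning: on the face $\{\Vert\Psi\Vert_\omega = R,\ 0\le s\le R\}$ the quadratic estimate \emph{is} already enough, since $\cL(\Phi)\le \tfrac{s^2}{2}-\tfrac{1}{2}\Vert\Psi\Vert_\omega^2 = \tfrac{s^2}{2}-\tfrac{R^2}{2}\le 0$ there. So only the face $\{s=R\}$ (equivalently, the case $s\ge\Vert\Psi\Vert_\omega$, which on $\partial\cM_R$ forces $s=R$) requires a lower bound on the quartic term. The paper's short route on that face is to note that the spectral projectors $\mathrm P^\pm$ extend to bounded operators on $L^4(\R,\C^2)$, so that
\[
\int_\R |\Phi^1|^4+|\Phi^2|^4\,dx \;\ge\; c\,\Vert\Phi\Vert_{L^4}^4 \;\ge\; c'\,\Vert \mathrm P^+\Phi\Vert_{L^4}^4 \;=\; c'\,s^4\Vert\Phi^+\Vert_{L^4}^4,
\]
with no $\Vert\Psi\Vert_\omega^4$ term appearing at all. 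Since all norms are equivalent on the one-dimensional line $\R\Phi^+$, this gives $\cL(\Phi)\le \tfrac{R^2}{2}-CR^4<0$ for $R$ large. Your pointwise inequality $|a+b|^4\ge\tfrac12|b|^4-C|a|^4$ would also work here, for the same reason: on $\{s=R\}$ it yields $\cL(\Phi)\le \tfrac{R^2}{2}-\tfrac12\Vert\Psi\Vert_\omega^2 -\kappa R^4 + C'\Vert\Psi\Vert_\omega^4$, and since $\Vert\Psi\Vert_\omega\le R$ the bad term is at most $C'R^4$; but you never actually check whether $\kappa>C'$, and instead abandon this line.

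The contradiction/scaling argument you propose as a replacement has a real gap. The closing step invokes an inequality of the form $\Vert \mathrm P^+\Phi_k\Vert_\omega \lesssim \Vert \mathrm P^+\Phi_k\Vert_{L^4}^{\theta}\Vert \mathrm P^+\Phi_k\Vert_{L^2}^{1-\theta}$, i.e.\ controlling an $H^{1/2}$-type norm from below by $L^4$ and $L^2$ norms. No such inequality holds: Gagliardo--Nirenberg goes the other way, and in one dimension one can easily have $\Vert u_k\Vert_{H^{1/2}}=1$ while $\Vert u_k\Vert_{L^4}\to 0$ (e.g.\ by spreading mass). So the contradiction does not close. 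Relatedly, the uniformity in $\Phi^+$ that you are trying to obtain is not required: in the linking argument (as in Esteban--S\'er\'e) one fixes a single direction $\Phi^+\in X^+$ with $\Vert\Phi^+\Vert_\omega=1$, and the constant $R$ is allowed to depend on it through $\Vert\Phi^+\Vert_{L^4}>0$. Once you drop the uniformity requirement and use the $L^4$-boundedness of the projectors, the proof is two lines.
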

  \begin{proof}
  Take $\Phi\in X^-\oplus\R_+\Phi^+\in\partial\cM_R$ as in the assumptions. Then if $\Vert\Psi\Vert_\omega\geq s$, it is immediate that $\cL(\Phi)\leq 0$. Otherwise, observe that the last integral in \eqref{eq:action2} is equivalent to $\Vert \Phi\Vert^4_{L^4}$, so we have
  \[
  \cL(\Phi)\leq \frac{1}{2}s^2-\frac{C}{4}\Vert\Phi\Vert^4_{L^4}\,,
  \]
  for some constant $C>0$. Moreover, note that \eqref{eq:splitX} induces an analogous decomposition on $L^4(\R,\C^2)$, so that 
  \[
  \Vert \psi^\pm\Vert_{L^4}\leq\Vert \psi\Vert_{L^4}\,,\qquad\forall \psi\in X\,.
  \]
  Then
  \[
  \cL(\Phi)\leq \frac{1}{2}s^2-\frac{1}{4}\Vert s\Phi^+\Vert^4_{L^4}\leq \frac{1}{2}s^2-\frac{C}{4}s^4\,,
  \]
as all norms are equivalent on the one-dimensional space $\R\Phi^+$. Since we assume $s\geq \Vert\Psi\Vert_\omega$, we must have $s=R$ and thus
\[
 \cL(\Phi)\leq \frac{R^2}{2}-C\frac{R^4}{4}<0\,,
\]
for $R>0$ large enough.
  \end{proof}
  The following lemma can be easily proved, exploiting the structure of the functional \eqref{eq:action2}.
  \begin{lem}\label{lem:positivesphere}
  There exist $r,\rho>0$ such that
  \[
  \inf_{\Phi\in S^+_r}\cL(\Phi)\geq\rho>0\,,
  \]
  where
  \be\label{eq:positivesphere}
  S^+_r:=\{\Phi\in X^+\,:\, \Vert\Phi\Vert_\omega=r\}
  \ee
  \end{lem}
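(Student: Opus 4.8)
The plan is to show that the quadratic part of $\cL$ dominates near the origin in $X^+$, where the quartic term is negligible. First I would observe that for $\Phi\in X^+$ we have $\cL(\Phi)=\frac{1}{2}\Vert\Phi\Vert_\omega^2-\frac{1}{4}\int_\R(\vert\Phi^1\vert^4+\vert\Phi^2\vert^4)\,dx$, since the negative component vanishes. The integral is bounded above by $\Vert\Phi\Vert_{L^4}^4$ up to a universal constant, so the key input is the continuous embedding $X^+\subset H^{1/2}(\R,\C^2)\hookrightarrow L^4(\R,\C^2)$: there is a constant $C_0>0$ with $\Vert\Phi\Vert_{L^4}\leq C_0\Vert\Phi\Vert_{H^{1/2}}$, and since $\omega\notin\operatorname{Spec}(\cD+\beta\sigma_3)$ the norm $\Vert\cdot\Vert_\omega$ restricted to $X^+$ is equivalent to the $H^{1/2}$-norm (because $\vert\cD+\beta\sigma_3-\omega\vert$ is bounded above and below by multiples of $\langle\cD+\beta\sigma_3\rangle$ on the spectral subspace, being bounded away from zero there). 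Hence $\Vert\Phi\Vert_{L^4}\leq C_1\Vert\Phi\Vert_\omega$ for some $C_1>0$ and all $\Phi\in X^+$.

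Next I would simply plug this into the expression for $\cL$: for $\Phi\in S^+_r$ one gets
\[
\cL(\Phi)\geq \frac{1}{2}r^2-\frac{C_1^4}{4}r^4=r^2\left(\frac{1}{2}-\frac{C_1^4}{4}r^2\right).
\]
Choosing $r>0$ small enough that $\frac{C_1^4}{4}r^2\leq\frac{1}{4}$, i.e. $r\leq C_1^{-2}$, the right-hand side is at least $\frac{1}{4}r^2=:\rho>0$, which is independent of $\Phi\in S^+_r$. Taking the infimum over $S^+_r$ then yields $\inf_{\Phi\in S^+_r}\cL(\Phi)\geq\rho>0$, as claimed.

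There is essentially no obstacle here; the only point requiring a word of care is the equivalence of $\Vert\cdot\Vert_\omega$ with the $H^{1/2}$-norm on $X^+$, which rests on the spectral gap assumption $\omega\notin\operatorname{Spec}(\cD+\beta\sigma_3)$ and the fact that $\cD+\beta\sigma_3$ has form domain $H^{1/2}(\R,\C^2)$ (recorded in Section \ref{sec:prel}). Once that equivalence is in hand, together with the Sobolev embedding $H^{1/2}(\R)\hookrightarrow L^4(\R)$ in one dimension, the statement is a two-line computation. I would also remark that the same estimate shows $\cL$ has a strict local minimum at $0$ along $X^+$, which together with Lemma \ref{lem:negative} furnishes the linking geometry needed for the subsequent minimax argument.
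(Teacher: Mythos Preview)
Your argument is correct and is precisely the standard computation the paper has in mind: the paper omits the proof entirely, stating only that the lemma ``can be easily proved, exploiting the structure of the functional \eqref{eq:action2}''. Your use of the Sobolev embedding $H^{1/2}(\R)\hookrightarrow L^4(\R)$ together with the equivalence of $\Vert\cdot\Vert_\omega$ and the $H^{1/2}$-norm (valid in fact on all of $X$, not just $X^+$, since $\omega\notin\operatorname{Spec}(\cD+\beta\sigma_3)$) is exactly what is intended.
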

  Consider the negative gradient flow $\eta_t$ of the functional $\cL$:
  \[
 \begin{cases}
\displaystyle \partial_t\eta_t=-\nabla\cL\circ\eta_t  \\[.2cm]
\displaystyle \eta_0=\operatorname{Id_X}\,.
\end{cases}.
\]
Arguing as in \cite[Lemma 2.5, Cor. 2.6]{es}, one sees that for any $t\geq0$ there holds
\[
\eta_t(\cM_R)\cap S^+_r\neq\emptyset\,.
\]
Then, combining Lemma \ref{lem:negative} and Lemma \ref{lem:positivesphere}, a deformation argument allows to prove the following
  \begin{prop}\label{prop:PS}
  Let the assumptions of Lemma \ref{lem:negative} be satisfied and define
\be\label{minmax}
c(\omega):=\inf_{t\geq0}\sup\cL(\eta_t(\cM_R)).
\ee
Then there exists a Palais-Smale sequence $\left( \Phi_{n}\right)\subset X$ at level $c(\omega)$, i.e. 
\be\label{eq:PS}
 \begin{cases}
\displaystyle \cL(\Phi_{n})\longrightarrow c(\omega)  \\[.2cm]
\displaystyle d\cL(\Phi_{n})\xrightarrow{X^{*}} 0,
\end{cases}
\qquad\mbox{as}\quad n\longrightarrow\infty \,,
\ee
and there results
\be
\label{eq:levelbound}
0<\rho\leq c(\omega)\leq\sup_{\cM_{R}}\cL<+\infty,\qquad\forall N\in\mathbb{N}.\nonumber
\ee
  \end{prop}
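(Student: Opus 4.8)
The plan is to obtain the Palais--Smale sequence from a linking theorem applied to the strongly indefinite functional $\cL$ written in the form \eqref{eq:action2}. The geometric ingredients are already in place: Lemma \ref{lem:positivesphere} provides the sphere $S^+_r$ in $X^+$ on which $\cL$ is bounded below by $\rho>0$, while Lemma \ref{lem:negative} shows $\cL\le 0$ on the boundary $\partial\cM_R$ of the "half-disc" $\cM_R\subset X^-\oplus\R_+\Phi^+$. Since $\cM_R$ and $S^+_r$ link (for $r<R$ and $R$ large, as guaranteed by the two lemmas), the standard min-max value $c(\omega)$ defined in \eqref{minmax} over the forward images of $\cM_R$ under the negative gradient flow $\eta_t$ is a critical value in the Palais--Smale sense.

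First I would record that $\cL\in C^2(X,\R)$, so that $\nabla\cL$ is well-defined and locally Lipschitz on $X$, and hence the negative gradient flow $\eta_t$ exists and is a well-defined continuous family of homeomorphisms of $X$ for all $t\ge 0$ (the nonlinear term $\Phi\mapsto \tfrac14\int (|\Phi^1|^4+|\Phi^2|^4)$ is $C^2$ on $H^{1/2}(\R,\C^2)$ by the Sobolev embedding $H^{1/2}(\R)\hookrightarrow L^4(\R)$, and its gradient is locally Lipschitz). Next I would invoke the intersection property: arguing exactly as in \cite[Lemma 2.5, Cor. 2.6]{es}, one shows by a degree-theoretic argument — using that $\eta_t$ is the identity outside a large ball and that it preserves the sublevel structure — that $\eta_t(\cM_R)\cap S^+_r\neq\emptyset$ for every $t\ge 0$. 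Consequently $\sup_{\cM_R}\cL(\eta_t(\cdot))\ge \inf_{S^+_r}\cL\ge\rho$ for all $t$, so $c(\omega)\ge\rho>0$; the upper bound $c(\omega)\le\sup_{\cM_R}\cL<+\infty$ is immediate by taking $t=0$ and using that $\cM_R$ is bounded and $\cL$ is continuous.

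Then I would derive the Palais--Smale sequence by the usual deformation/contradiction argument: if no Palais--Smale sequence at level $c(\omega)$ existed, there would be $\delta,\eps>0$ with $\|d\cL(\Phi)\|_{X^*}\ge\delta$ on $\cL^{-1}([c-\eps,c+\eps])$; a standard quantitative deformation lemma then yields, for $t$ large, that $\eta_t$ pushes the sublevel set $\cL^{c+\eps}$ below $c-\eps$ on the relevant region, while leaving $\partial\cM_R$ (where $\cL\le 0<c-\eps$ by Lemma \ref{lem:negative}, taking $\eps$ small) fixed; this contradicts $\sup_{\cM_R}\cL(\eta_t(\cdot))\ge c-\eps$. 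Hence $(\Phi_n)$ with \eqref{eq:PS} exists, and the two-sided bound on $c(\omega)$ follows as above.

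\textbf{The main obstacle} I anticipate is establishing the intersection property $\eta_t(\cM_R)\cap S^+_r\neq\emptyset$ rigorously in the strongly indefinite setting: because $X^-$ is infinite-dimensional, one cannot use a naive Brouwer-degree argument on a finite-dimensional disc, and one must instead either reduce to a Galerkin/finite-dimensional approximation or use the generalized (Leray--Schauder type) degree, checking that the flow $\eta_t$ has the form "identity $+$ compact" relative to the splitting $X^+\oplus X^-$ — which here follows from the compactness of the Sobolev embedding on bounded sets only, so some care with the unboundedness of $\R$ is needed (this is precisely why the argument of \cite{es} is invoked rather than reproduced). Once that topological intersection is granted, the remaining steps are routine applications of the deformation lemma.
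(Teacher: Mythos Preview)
Your proposal is correct and follows exactly the approach the paper sketches: the paper does not give a detailed proof of the proposition but simply states that, once the intersection property $\eta_t(\cM_R)\cap S^+_r\neq\emptyset$ is established by arguing as in \cite[Lemma~2.5, Cor.~2.6]{es}, a deformation argument combining Lemma~\ref{lem:negative} and Lemma~\ref{lem:positivesphere} yields the result. Your write-up fills in precisely those details, and your identification of the infinite-dimensional linking as the delicate point (handled by referring to \cite{es}) matches the paper's treatment.
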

Let $(\Phi_n)\subset X$ be a Palais-Smale sequence for $\cL$. Standard arguments show that such sequence is bounded, see e.g. \cite[Lemma 4.1]{CCM}. Then, up to subsequences, 
  \[
   \begin{cases}
\displaystyle \Phi_n\to\Phi\,,\qquad\mbox{weakly in $X$}  \\[.2cm]
\displaystyle \Phi_n\to\Phi\,,\qquad\mbox{strongly in $L^p_{\text{loc}}(\R,\C^2)$},\quad \forall p\geq2\,,
\end{cases} \quad n\to\infty\,.
  \]
Observe that by \eqref{eq:PS} and \eqref{eq:levelbound} we get
\[
\lim_{n\to\infty} \frac{1}{4}\int_\R \vert\Phi^1_n\vert^4+\vert\Phi^2_n\vert^4\,dx =c(\omega)>0
\]
 and then
\be\label{eq:L4bound}
\liminf_{n\to\infty} \int_\R \vert\Phi_n\vert^4 \,dx =\alpha>0
\ee
for some $\alpha>0$. In order to obtain strong $X$-convergence for $(\Phi_n)$ we use a Concentration Compactness argument \cite{CCcompactI,CCcompactII}. Suppose, first the sequence is \emph{vanishing}, that is, for all $R>0$
\[
\limsup_{n\to\infty}\sup_{y}\int_{B_R(y)}\vert\Phi_n\vert^4\,dx=0\,.
\]
  This implies, in particular, that $\Phi_n\to0$ in $L^4$ as $n\to\infty$. But this contradicts \eqref{eq:L4bound}, so vanishing alternative is ruled out. 
  
  We are thus in the \emph{dichotomy case} (we consider \emph{concentration} as a particular case of \emph{dichotomy} with only one non trivial profile). Then there exist a sequence of points $y_n\in\R$ and radii $R^1_n,R^2_n\to\infty$ with $\lim_{n\to\infty}\frac{R^2_n}{R^1_n}=\infty$, such that
  \[
  \lim_{n\to\infty}\int_{B_{R^1_n}(y_n)}\vert\Phi_n\vert^4\,dx = \tilde{\alpha}\in(0,\alpha)\,,\quad \lim_{n\to\infty}\int_{B_{R^2_n}(y_n)\setminus B_{R^1_n}(y_n)}\vert\Phi_n\vert^4\,dx =0\,.
  \]
  Moreover, for any $\eps>0$ there exists $R_\eps>0$ such that
  \be\label{eq:L4conc}
  \int_{B_{R_\eps}(y_n)}\vert\Phi_n\vert^4\,dx \geq \tilde{\alpha}-\eps\,,\qquad\forall n\in\N\,,
  \ee
  that is, almost all the portion $\tilde{\alpha}$ of the $L^4$-norm is concentrated in the ball $B_{R_\eps}(y_n)$. Up to translation, wee can center the bump at the origin and take $y_n=0$ in the above formulas. 
  
  Consider a cutoff $\theta\in C^\infty_c([0\infty))$, with $\theta\equiv1$ on $[0,1]$, $\theta\equiv0$ on $[2,\infty)$ and $\vert\theta'\vert\leq 2$. Define the sequence
  \[
  \Psi_n:=\theta(\vert x\vert/R^1_n)\Phi_n\,.
  \]
  It is not hard to see that $\Psi_n$ is a Palais-Smale sequence for $\cL$ at level $\tilde{\alpha}>0$, and \eqref{eq:L4conc} ensures $\Psi_n\to\Psi$, strongly in $L^4$ as $n\to\infty$, for some $\Psi\in X$.We can now turn this into \emph{strong} $X$-convergence as follows.
  
  Observe that by \eqref{eq:PS} there holds
  \[
  (\cD+\beta\sigma_3-\omega)\Psi_n=G(\Psi_n)\Psi_n+o(1)\,,\qquad \mbox{in $H^{-1/2}(\R,\C^2)$}\,,
  \]
  and then
    \be\label{eq:invertDirac}
  \Psi_n=(\cD+\beta\sigma_3-\omega)^{-1}[G(\Psi_n)\Psi_n]+o(1)\,,\qquad \mbox{in $H^{1/2}(\R,\C^2)$}\,.
  \ee
  We claim that 
\be\label{eq:square}
\vert\Psi_n\vert^2\Psi_n\to\vert\Psi\vert^2\Psi\,,\qquad \mbox{strongly in $L^{4/3}(\R,\C^2)$}\,.
\ee 

To this aim, observe that, up to subsequences, 
\be\label{eq:L^2}
\vert\Psi_n\vert^2\to\vert\Psi\vert^2\,,\qquad  \mbox{strongly in $L^2(\R,\C^4)$.}
\ee
 Indeed, the sequence is bounded in $L^2$ as $\Vert\vert\Psi_n\vert^2\Vert_{L^2}=\Vert\Psi_n\Vert^2_{L^4}\leq C$, for all $n\in\N$. Then $\vert\Psi_n\vert^2\rightharpoonup \vert\Psi\vert^2$, weakly in $L^2$. Moreover, $\Vert\vert\Psi_n\vert^2\Vert_{L^2}=\Vert\Psi_n\Vert^2_{L^4}\to\Vert\Psi\Vert^2_{L^4}=\Vert\vert\Psi\vert^2\Vert_{L^2}$, and the $L^2$ strong convergence follows.
 
 Notice that
 \[
 \Vert \vert\Psi_n\vert^2\Psi_n-\vert\Psi\vert^2\Psi\Vert_{L^{4/3}}\leq  \Vert \vert\Psi_n\vert^2\Psi_n-\vert\Psi_n\vert^2\Psi_\infty\Vert_{L^{4/3}}+ \Vert \vert\Psi_n\vert^2\Psi-\vert\Psi\vert^2\Psi\Vert_{L^{4/3}}
 \]

The H\"older inequality and strong $L^4$-convergence give
\be\label{eq:1}
\begin{split}
 \Vert \vert\Psi_n\vert^2\Psi_n-\vert\psi_n\vert^2\Psi\Vert^{4/3}_{L^{4/3}}&=\int_{\R^2}\vert\vert\Psi_n\vert^2\Psi_n-\vert\Psi_n\vert^2\Psi_\infty \vert^{\frac{4}{3}}\,dx \\
  &\leq\left(\int_{\R^2}\vert\Psi_n\vert^4 \,dx\right)^{\frac{2}{3}}\left(\int_{\R^2}\vert\Psi_n-\Psi_\infty\vert^4 \,dx\right)^{\frac{1}{3}}\\
  &\leq C\left(\int_{\R^2}\vert\Psi_n-\Psi_\infty\vert^4 \,dx\right)^{\frac{1}{3}}=o(1)\,.
\end{split}
\ee
Similarly, by H\"older and \eqref{eq:L^2} we get
\be\label{eq:2}
\begin{split}
 \Vert \vert\Psi_n\vert^2\Psi-\vert\Psi\vert^2\Psi\Vert_{L^{4/3}}&=\int_{\R^2}\vert\vert\Psi\vert^2\Psi-\vert\Psi\vert^2\Psi \vert^{\frac{4}{3}}\,dx\\
  & \leq\left( \int_{\R^2}\vert\vert\Psi_n\vert^2-\vert\Psi\vert^2 \vert^2\,dx\right)^{\frac{2}{3}}\left( \int_{\R^2}\vert\Psi \vert^4\,dx\right)^{\frac{1}{3}}=o(1)\,.
\end{split}
\ee
The claim is proved combining \eqref{eq:1} and \eqref{eq:2}. Then, observing that $\vert G(\Psi_n)\vert\sim\vert\Psi_n\vert^2$, the desired strong $X$-convergence follows from \eqref{eq:invertDirac}.
\smallskip

We are now in a position to give the proof of the last main result of the paper.

    \begin{proof}[Proof of Theorem \ref{thm:main4}]
  The existence of a Palais-Smale sequence for $\cL$ is provided by Proposition \ref{prop:PS}, and the subsequent analysis shows how to find a strongly convergence subsequence. So $\cL$ admits a critical point, which is a weak solution to \eqref{eq:stationary}.
  \smallskip
  
We now prove the decay estimate \eqref{eq:decay2}. First, observe that $H^{1/2}(\R,\C^2)\hookrightarrow L^p(\R,\C^2)$, for all $p\in[2,\infty)$. Then
\be\label{eq:Dpsi}
\cD\Phi=(\omega-\beta\sigma_3)\Phi+G(\Phi)\Phi\in L^2(\R,\C^2)\,,
\ee
and since $\Vert \cD\Phi\Vert_{L^2}=\Vert\partial_x\Phi\Vert_{L^2}$, we conclude that $\Phi\in H^1(\R,\C^2)$. By the Sobolev embedding, $\Phi$ is thus H\"older continuous and tends to zero at infinity, that is
\be\label{eq:zeroatinfinity}
\lim_{x\to\pm\infty}\Phi(x)=0\,.
\ee
Since $\cD^2=-\partial_{xx}$, by \eqref{eq:Dpsi} one concludes that 
\[
-\partial_{xx}\Phi=(\omega-\beta\sigma_3)\cD\Phi-\imath\beta'\sigma_1\sigma_3\Phi+\cD(G(\Phi)\Phi)\in L^2(\R,\C^2)\,,
\]
as $\beta'\in L^\infty(\R,\C^2)$ and $\cD(G(\Phi)\Phi)$ is the sum of terms of the schematic form $\vert\Phi\vert^2\partial_x\Phi$. Then $\Phi\in H^2(\R,\C^2)$ and $\partial_x\Phi$ is H\"older continuous and tends to zero at infinity. 

Rewrite \eqref{eq:stationary} as
\[
(\cD+\beta(\infty)\sigma_3-\omega)\Phi=(\beta(\infty)-\beta)\sigma_3\Phi+\omega\Phi+G(\Phi)\Phi\,.
\]
Applying the operator $(\cD+\beta(\infty)\sigma_3+\omega)$ to both sides one gets
\[
(-\partial_{xx}+\beta(\infty)^2-\omega^2)\Phi=H(x)\,,
\]
where $H$ is a continuous function tending to zero at infinity. We can thus apply the Green's function to obtain
\[
\Phi(x)=\frac{1}{2\sqrt{\beta(\infty)^2-\omega^2}}\int_{\R}H(y)e^{-\vert x-y\vert \sqrt{\beta(\infty)^2-\omega^2}}\,dy\,,
\]
and the exponential decay follows. Observe, finally, that iterating the above argument one actually proves that $\Phi\in H^k(\R,\C^2)$, for all $k\in\N$ and so it is smooth.
    \end{proof}



\begin{thebibliography}{10}

\bibitem{arbunichsparber}
{\sc J.~Arbunich and C.~Sparber}, {\em Rigorous derivation of nonlinear {D}irac
  equations for wave propagation in honeycomb structures}, J. Math. Phys., 59
  (2018), pp.~011509, 18.
  
  \bibitem{bh}
{\sc I. Bejenaru and S. Herr},  {\em The cubic Dirac equation: small initial data in $H^{1/2}(\mathbb{R}^2)$},  Comm. Math. Phys. 343 (2016), no. 2, 515–562.

\bibitem{bihari}
{\sc I.Bihari},  {\em A generalization of a lemma of Bellman and its application to
   uniqueness problems of differential equations}, Acta Math. Acad. Sci. Hungar.
    7 (1956), pp. 81--94.

\bibitem{CCM}
{\sc W.~Borrelli}, {\em Symmetric solutions for a 2d critical {D}irac
  equation}, Communications in Contemporary Mathematics, 0 (0), p.~2150019.

\bibitem{shooting}
\leavevmode\vrule height 2pt depth -1.6pt width 23pt, {\em Stationary solutions
  for the 2{D} critical {D}irac equation with {K}err nonlinearity}, J.
  Differential Equations, 263 (2017), pp.~7941--7964.




\bibitem{JDEgrafi}
{\sc W.~Borrelli, R.~Carlone, and L.~Tentarelli}, {\em On the nonlinear dirac
  equation on noncompact metric graphs}, Journal of Differential Equations, 278
  (2021), pp.~326--357.

\bibitem{borrellifrank}
{\sc W.~Borrelli and R.~L. Frank}, {\em Sharp decay estimates for critical
  {D}irac equations}, Trans. Amer. Math. Soc., 373 (2020), pp.~2045--2070.

\bibitem{bc}
{\sc N. Bournaveas and T. Candy}, {\em Global well-posedness for the massless cubic {D}irac equation}, Int. Math. Res. Not. IMRN 2016, no. 22, 6735–6828.


\bibitem{es}
{\sc M.~J. Esteban and E.~S\'er\'e}, {\em Stationary states of the nonlinear
  {D}irac equation: a variational approach}, Comm. Math. Phys., 171 (1995),
  pp.~323--350.

\bibitem{FWhoneycomb}
{\sc C.~L. Fefferman and M.~I. Weinstein}, {\em Honeycomb lattice potentials
  and dirac points}, J. Amer. Math. Soc., 25 (2012), pp.~1169--1220.

\bibitem{wavedirac}
\leavevmode\vrule height 2pt depth -1.6pt width 23pt, {\em Waves in honeycomb
  structures}, Journ\'ees \'equations aux d\'eriv\'ees partielles,  (2012).

\bibitem{FWwaves}
\leavevmode\vrule height 2pt depth -1.6pt width 23pt, {\em Wave packets in
  honeycomb structures and two-dimensional {D}irac equations}, Comm. Math.
  Phys., 326 (2014), pp.~251--286.

\bibitem{goodman}
{\sc R.~H. Goodman, M.~I. Weinstein, and P.~J. Holmes}, {\em Nonlinear
  propagation of light in one-dimensional periodic structures}, J. Nonlinear
  Sci., 11 (2001), pp.~123--168.

\bibitem{KLS}
{\sc K.~Kirkpatrick, E.~Lenzmann, and G.~Staffilani}, {\em On the continuum
  limit for discrete {NLS} with long-range lattice interactions}, Comm. Math.
  Phys., 317 (2013), pp.~563--591.

\bibitem{Lady}
{\sc O.~A. Ladyzhenskaya}, {\em The boundary value problems of mathematical
  physics}, vol.~49 of Applied Mathematical Sciences, Springer-Verlag, New
  York, 1985.
\newblock Translated from the Russian by Jack Lohwater [Arthur J. Lohwater].

\bibitem{CCcompactI}
{\sc P.-L. Lions}, {\em The concentration-compactness principle in the calculus
  of variations. {T}he locally compact case. {I}}, Ann. Inst. H. Poincar\'e
  Anal. Non Lin\'eaire, 1 (1984), pp.~109--145.

\bibitem{CCcompactII}
\leavevmode\vrule height 2pt depth -1.6pt width 23pt, {\em The
  concentration-compactness principle in the calculus of variations. {T}he
  locally compact case. {II}}, Ann. Inst. H. Poincar\'e Anal. Non Lin\'eaire, 1
  (1984), pp.~223--283.

\bibitem{longhi}
{\sc S.~Longhi}, {\em Photonic analog of zitterbewegung in binary waveguide
  arrays}, Opt. Lett., 35 (2010), pp.~235--237.

\bibitem{LWW}
{\sc J.~Lu, A.~B. Watson, and M.~I. Weinstein}, {\em Dirac operators and domain
  walls}, SIAM Journal on Mathematical Analysis, 52 (2020), pp.~1115--1145.

\bibitem{marini-neutrino}
{\sc A.~Marini, S.~Longhi, and F.~Biancalana}, {\em Optical simulation of
  neutrino oscillations in binary waveguide arrays}, Phys. Rev. Lett., 113
  (2014), p.~150401.

\bibitem{pely}
{\sc D.~Pelinovsky}, {\em Survey on global existence in the nonlinear {D}irac
  equations in one spatial dimension}, in Harmonic analysis and nonlinear
  partial differential equations, RIMS K\^{o}ky\^{u}roku Bessatsu, B26, Res.
  Inst. Math. Sci. (RIMS), Kyoto, 2011, pp.~37--50.

\bibitem{struwevariational}
{\sc M.~Struwe}, {\em Variational methods}, vol.~34 of Ergebnisse der
  Mathematik und ihrer Grenzgebiete. 3. Folge. A Series of Modern Surveys in
  Mathematics [Results in Mathematics and Related Areas. 3rd Series. A Series
  of Modern Surveys in Mathematics], Springer-Verlag, Berlin, fourth~ed., 2008.
\newblock Applications to nonlinear partial differential equations and
  Hamiltonian systems.

\bibitem{Teschl}
{\sc G.~Teschl}, {\em Ordinary differential equations and dynamical systems},
  vol.~140 of Graduate Studies in Mathematics, American Mathematical Society,
  Providence, RI, 2012.

\bibitem{diracthaller}
{\sc B.~Thaller}, {\em The {D}irac equation}, Texts and Monographs in Physics,
  Springer-Verlag, Berlin, 1992.

\bibitem{Tran-JR}
{\sc T.~X. Tran and F.~Biancalana}, {\em Linear and nonlinear photonic
  jackiw-rebbi states in interfaced binary waveguide arrays}, Phys. Rev. A, 96
  (2017), p.~013831.

\bibitem{Tran-solitons}
{\sc T.~X. Tran, S.~Longhi, and F.~Biancalana}, {\em Optical analogue of
  relativistic dirac solitons in binary waveguide arrays}, Annals of Physics,
  340 (2014), pp.~179--187.

\end{thebibliography}
\end{document}